\newtheorem{theorem}{Theorem}[section]
\newtheorem{lemma}[theorem]{Lemma}
\newtheorem{prop}[theorem]{Proposition}
\theoremstyle{definition}
\newtheorem{rem}[theorem]{Remark}
\newtheorem{Pb}[theorem]{Problem}
\newtheorem{exa}[theorem]{Example}
\newcommand\pf{\begin{proof}}
\newcommand\epf{\end{proof}}
\newcommand\card{\mathrm{card}}
\newcommand\ab{\mathrm{ab}}
\newcommand\co{\mathrm{co}}
\newcommand\eps{\varepsilon}
\newcommand\Frac{\operatorname{Frac}}
\newcommand\gr{\operatorname{gr}}
\renewcommand\AA{\mathcal A}
\newcommand\BB{\mathcal B}
\newcommand\CC{\mathcal C}
\newcommand\UU{\mathcal U}
\newcommand\VV{\mathcal V}
\newcommand\WW{\mathcal W}
\newcommand\NN{\mathbb{N}}
\newcommand\ZZ{\mathbb{Z}}
\newcommand\QQ{\mathbb{Q}}
\newcommand\mm{\mathfrak{m}}
\DeclareMathOperator{\id}{id}
\newcommand{\rbtimess}{\mathbin{\raisebox{0.2ex}%
{\makebox[1em][l]{${\scriptstyle>\mathrel{\mkern-4mu}\blacktriangleleft}$}}}}
\newcommand{\rdtimess}{\mathbin{\raisebox{0.2ex}%
{\makebox[1em][l]{${\scriptstyle>\mathrel{\mkern-4mu}\rlap{$\scriptstyle{\lessdot}$}{\vartriangleleft}}$}}}}
\numberwithin{equation}{section}
\title[The Noether problem for Hopf algebras]
{The Noether problem for Hopf algebras}
\author{Christian Kassel}
\address{Christian Kassel, 
Institut de Recherche Math\'e\-ma\-tique Avanc\'ee,
CNRS \& Universit\'e de Strasbourg,
7 rue Ren\'{e} Descartes, 67084 Strasbourg, France}
\email{kassel@math.unistra.fr}
\author{Akira Masuoka}
\address{Akira Masuoka,
Institute of Mathematics, 
University of Tsukuba, 
Ibaraki 305-8571, Japan}
\email{akira@math.tsukuba.ac.jp}
\begin{document}

\begin{abstract}
In previous work, Eli Aljadeff and the first-named author 
attached an algebra~$\BB_H$ of rational fractions to each Hopf algebra~$H$.
The generalized Noether problem is the following: for which finite-dimensional Hopf algebras~$H$
is $\BB_H$ the localization of a polynomial algebra?
A positive answer to this question when $H$ is the algebra of functions on a finite group~$G$
implies a positive answer for the classical Noether problem for~$G$.
We show that the generalized Noether problem has a positive answer 
for all finite-dimensional pointed Hopf algebras over a field of characteristic zero
(we actually give a precise description of~$B_H$ for such a Hopf algebra).

A theory of polynomial identities for comodule algebras over a Hopf algebra~$H$
gives rise to a universal comodule algebra 
whose subalgebra of coinvariants~$\VV_H$ maps injectively into~$\BB_H$.
In the second half of this paper, we show that $\BB_H$ is a localization of~$\VV_H$
when $H$ is a finite-dimensional pointed Hopf algebra in characteristic zero.
We also report on a result by Uma Iyer 
showing that the same localization result holds when $H$ is the algebra of functions on a finite group.
\end{abstract}

\maketitle

\noindent
{\sc Key Words:}
Hopf algebra, Noether problem, invariant theory, rationality, polynomial identities, localization

\medskip
\noindent
{\sc Mathematics Subject Classification (2000):}
16T05, 
16W22, 
16R50, 
14E08 
(Primary);
13A50, 
13B30, 
12F20 
(Secondary)

\hspace{3cm}

\section{\large Introduction}
Let $G$ be a finite group and $k$ a field. Consider the purely transcendental extension $K = k(t_g\, | \, g\in G)$ of~$k$
generated by indeterminates~$t_g$ indexed by the elements of~$G$. The group~$G$ acts on~$K$ by
left multiplication: $h\cdot t_g = t_{hg}$ ($g,h \in G$). 
Let $L = K^G$ be the subfield of $G$-invariant elements of~$K$.
Emmy Noether posed the following problem in\,\cite{No}: 
\emph{is $L$ a purely transcendental extension of~$k$?}
A positive answer ensures that $G$ can be realized as the Galois group of a Galois extension of~$k$;
it also implies the existence of a generic polynomial for~$G$ over~$k$ (at least when $k$ is infinite). 

There is an abundant bibliography on Noether's problem. Answers depend on the group and on the base field. 
For instance, by Fischer\,\cite{Fi} the extension $L/k$ is purely transcendental 
if $G$ is abelian and $k$~contains a primitive $e$-th root of unity, where $e$ is the exponent of~$G$.
If $k$ has not enough roots of unity, for instance when $k = \QQ$ is the field of rationals, 
then Swan\,\cite{Swa} showed that there are cyclic groups~$G$ such that $L$ is not purely transcendental over~$k$. 
For non-abelian groups, the answer to the problem may be negative even over an algebraically closed field: 
by Saltman\,\cite{Sa} this is the case for certain meta-abelian $p$-groups when $k$ is the field of complex numbers.

In this note we extend Noether's problem to the framework of finite-dimensional Hopf algebras
over the base field~$k$. 
To such a Hopf algebra~$H$ Eli Aljadeff and the first-named author associated an algebra~$\BB_H$ of
rational fractions, which is a finitely generated smooth domain of Krull dimension equal to the dimension of~$H$
(see\,\cite{AK}).
The algebra~$\BB_H$ is called the \emph{generic base algebra} associated to~$H$; 
in the terminology of non-commutative geometry it is the ``base space'' of a ``non-commutative fiber bundle'' whose 
fibers are the forms of~$H$.

The generalized Noether problem (GNP) which we introduce in this paper is the following: 
\emph{is $\BB_H$ the localization of a polynomial algebra?}
A positive answer to~(GNP) for the Hopf algebra of $k$-valued functions on
a finite group~$G$ implies a positive answer for the classical Noether problem for~$G$ and~$k$.
Our first main result (Theorem\,\ref{th-GNP}) states that (GNP) has a positive answer 
for all \emph{finite-dimensional pointed} Hopf algebras over a field of characteristic zero.
Actually in Theorem\,\ref{th-GNP-degrees} we prove more precisely that for such a Hopf algebra~$H$, 
\[
\BB_H = k[u_1^{\pm 1}, \ldots, u_{\ell}^{\pm 1}, u_{\ell+1}, \ldots, u_n]  \, ,
\]
where $n$ is the dimension of~$H$ and $\ell$ is the order of the group~$G$ of group-like elements of~$H$,
and where $u_1, \ldots, u_n$ are monomials whose degrees are bounded by an integer defined 
in terms of a certain abelian quotient of~$G$.
The latter statement is a Hopf algebra analogue of the fact, due to Noether\,\cite{No0}, 
that the algebra of $G$-invariant polynomials in~$k[t_g\, | \, g\in G]$ 
is generated by homogeneous polynomials of degree~$\leq \card\, G$.

A theory of polynomial identities for comodule algebras had also been set up in\,\cite{AK}, giving rise to the
so-called {universal comodule algebra}~$\UU_H$, an analogue of the relatively free algebra
of the classical theory of polynomial identities. 
The subalgebra~$\VV_H$ of $H$-coinvariants of~$\UU_H$ maps injectively into~$\BB_H$.
In Section\,\ref{sec-PI} we show that $\BB_H$ is a localization of~$\VV_H$
when $H$ is a finite-dimensional pointed Hopf algebra over a field of characteristic zero
(see Theorem\,\ref{th-local}).
Uma Iyer showed likewise that $\BB_H$ is a localization of~$\VV_H$
when $H$ is the algebra of $k$-valued functions on a finite group
whose order is prime to the characteristic of~$k$;  
with her permission we state and prove her result (Theorem\,\ref{thm-UI}) in Section\,\ref{subsec-UI}.

Throughout the paper we fix a field~$k$.
All linear maps are to be $k$-linear
and unadorned tensor products mean tensor products over~$k$.

\section{\large Pointed Hopf algebras}\label{sec-general}

In this section we prove some facts on pointed Hopf algebras
needed in the proofs of the main results.

\subsection{Hopf algebras and comodule algebras}\label{subsec-Hopf}

By algebra we mean an associative unital $k$-algebra
and by coalgebra a coassociative counital $k$-coalgebra. 
We denote the coproduct of a coalgebra by~$\Delta$ and its counit by~$\eps$.
We shall also make use of a Heyneman-Sweedler-type notation 
for the image 
\[
\Delta(x) = x_1 \otimes x_2
\]
of an element~$x$ of a coalgebra~$C$ under the coproduct, and we write
$$\Delta^{(2)}(x) = x_1 \otimes x_2 \otimes x_3$$
for the image of~$x$ under the iterated coproduct 
$\Delta^{(2)} = (\Delta \otimes \id_C) \circ \Delta$. 

Given a Hopf algebra~$H$, we denote its counit by~$\eps$ and its antipode by~$S$.
We also denote the augmentation ideal~$\ker(\eps : H \to k)$ by~$H^+$
and the group of group-like elements of~$H$ by~$G(H)$.

Recall that a right $H$-\emph{comodule algebra} over a Hopf algebra~$H$ is an algebra~$A$ 
equipped with a right $H$-comodule structure whose (coassociative, counital) \emph{coaction}
$\delta : A \to A \otimes H$ is an algebra map.
The subalgebra~$A^{\co-H}$ of \emph{right coin\-var\-iants} of an $H$-comodule algebra~$A$
is the following subalgebra of~$A$:
\begin{equation*}
A^{\co-H} = \{ a \in A \, | \, \delta(a)  = a \otimes 1\} \, .
\end{equation*}

\subsection{Finite-dimensional pointed Hopf algebras}\label{subsec-pointed}

In this subsection we present three technical results for finite-dimensional pointed Hopf algebras.

Recall that a Hopf algebra~$H$ is \emph{pointed} if any simple subcoalgebra is one-dimensional.
Group algebras~$k[G]$, enveloping algebras~$U(\mathfrak{g})$ of Lie algebras,
Drinfeld--Jimbo quantum enveloping algebras~$U_q(\mathfrak{g})$ 
and their finite-dimensional quotients~$u_q(\mathfrak{g})$
are important examples of pointed Hopf algebras.

See\,\cite[Chap.\,VIII]{Swe} and\,\cite[Chap.\,5]{Mo} for basic properties of pointed Hopf algebras.

\begin{lemma}\label{lem:com}
Any finite-dimensional commutative pointed Hopf algebra over a field of characteristic zero is a group algebra.
\end{lemma}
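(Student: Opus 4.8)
The plan is to use the structure theory of finite-dimensional commutative Hopf algebras in characteristic zero. First I would dualize: if $H$ is a finite-dimensional commutative pointed Hopf algebra over~$k$, then its linear dual $H^* = \Hom_k(H,k)$ is again a finite-dimensional Hopf algebra, and commutativity of~$H$ translates into cocommutativity of~$H^*$, while pointedness of~$H$ translates into the property that $H^*$ has a unique grouplike element over~$\bar k$ — equivalently, that $H^*$ is \emph{connected} (its coradical is one-dimensional), since the simple subcoalgebras of~$H$ correspond to the blocks/primitive central idempotents of~$H^*$ and being one-dimensional means each such block is a field extension that, in characteristic zero after base change, splits. So the task reduces to showing that a finite-dimensional connected cocommutative Hopf algebra~$A$ over a field of characteristic zero is trivial, i.e. $A = k$, which then gives $H^* = k[G]^*$ hence $H \cong k[G]$ as Hopf algebras for $G = G(H)$.

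The key step is therefore: a connected Hopf algebra in characteristic zero that is finite-dimensional must be one-dimensional. Here I would invoke the Milnor--Moore / Cartier--Kostant--Gabriel structure theorem: over a field of characteristic zero, a connected cocommutative Hopf algebra is the universal enveloping algebra $U(\mathfrak g)$ of its Lie algebra~$\mathfrak g$ of primitives. If $\mathfrak g \neq 0$, then $U(\mathfrak g)$ contains a polynomial subalgebra $k[x]$ on any nonzero primitive~$x$ (by the PBW theorem), contradicting finite-dimensionality; hence $\mathfrak g = 0$ and $A = U(0) = k$. Alternatively, and perhaps cleaner for the commutative side directly: a finite-dimensional commutative Hopf algebra~$H$ over a field of characteristic zero is cosemisimple and semisimple (Larson--Sweedler type arguments, or the Cartier theorem that in characteristic zero every commutative Hopf algebra is reduced, so $\Spec H$ is a finite reduced group scheme, i.e. an honest finite group~$G$, giving $H \cong k(G)$... but one must be careful: $k(G)$ is pointed only if $k$ contains enough roots of unity, whereas the \emph{dual} statement is what we want). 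So I would run the argument on~$H^*$ rather than on~$H$.

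Assembling: from $H^*$ connected cocommutative in characteristic zero we get $H^* \cong k[G(H^*)] = k[\{1\}] $-type triviality only in the unipotent direction, but $H^*$ need not be connected — rather, the Cartier--Kostant decomposition says any cocommutative Hopf algebra over an algebraically closed field splits as a smash product $k[G] \ltimes U(\mathfrak g)$ of its grouplikes and its connected component. Over a field of characteristic zero with $H$ finite-dimensional, the connected part is forced to be~$k$ by the PBW argument above, so $H^*$ becomes, after base change to~$\bar k$, the group algebra $\bar k[G(H^*_{\bar k})]$; commutativity of~$H$ says this group is abelian. Pointedness of~$H$ is exactly what is needed to descend this from~$\bar k$ to~$k$: it guarantees all grouplikes of~$H^*$ are already rational over~$k$ (the simple subcoalgebras being one-dimensional means the dual blocks are copies of~$k$), so $H^* \cong k[G]$ over~$k$ itself, whence $H \cong (k[G])^* $ -- wait, we want $H \cong k[G']$; dualize once more, using $G' := G(H)$, to conclude $H \cong k[G(H)]$.

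The main obstacle I anticipate is the \emph{descent} bookkeeping: keeping straight which of ``pointed'' and ``connected'' lives on~$H$ versus~$H^*$, and verifying that pointedness of~$H$ (not merely of $H \otimes_k \bar k$) is precisely the hypothesis that lets one avoid assuming roots of unity in~$k$ — in other words, ruling out the phenomenon whereby $H$ becomes a group algebra only after a field extension. The characteristic-zero hypothesis is used in exactly one essential place, to kill the infinitesimal (connected unipotent) part via PBW; everything else is formal. I expect the write-up to be short once the correct dual formulation is fixed.
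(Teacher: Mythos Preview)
Your approach via the dual $H^*$ and Cartier--Kostant can be made to work, but two key steps are misattributed. First, you claim ``commutativity of~$H$ says this group is abelian,'' meaning $G' = G(H^*_{\bar k})$. It does not: commutativity of~$H$ is cocommutativity of~$H^*$, which you have already spent to invoke Cartier--Kostant, and every group algebra $\bar k[G']$ is cocommutative regardless of~$G'$. What actually forces $G'$ to be abelian is \emph{pointedness} of~$H$: this says $H^*/J(H^*)$ is a product of copies of~$k$, so the semisimple algebra $H^*_{\bar k} = \bar k[G']$ is a product of copies of~$\bar k$, hence commutative. Second, your descent is on the wrong side. Pointedness of~$H$ does \emph{not} make the group-likes of~$H^*$ rational over~$k$: take $H = \QQ[\ZZ/3]$, which is pointed, yet $|G(H^*)| = 1$ while $|G(H^*_{\bar\QQ})| = 3$. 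What pointedness does give is $G(H) = G(H_{\bar k})$, since the coradical $H_0 = k[G(H)]$ satisfies $(H_{\bar k})_0 = H_0 \otimes \bar k$ in characteristic zero. Once you have $H_{\bar k} \cong \bar k[\widehat{G'}]$ via Fourier, you then count $|G(H)| = |G(H_{\bar k})| = \dim H$ and linear independence of group-likes gives $H = k[G(H)]$. Your ``wait'' at the end signals exactly this unresolved switch from $H^*$ back to~$H$.

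For comparison, the paper runs the argument directly on~$H$: Cartier's theorem gives $H_{\bar k}$ reduced, hence $H_{\bar k} = O_{\bar k}(G)$ for a finite group~$G$; pointedness forces every simple $\bar k[G]$-module to be one-dimensional, so $G$ is abelian; Fourier and the descent $G(H) = G(H_{\bar k})$ conclude. Your PBW collapse of~$U(\mathfrak g)$ on~$H^*$ is precisely the dual of Cartier's reducedness theorem on~$H$, so the two routes are equivalent in content, but working directly on~$H$ avoids the double dualization and the bookkeeping that tripped you up.
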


\begin{proof}
Since $H$ is pointed, by scalar extension to the algebraic closure~$\bar{k}$ of~$k$, 
we may suppose that $k$ is algebraically closed;  note that $G(H) = G(H\otimes \bar{k})$. 
It then follows from\,\cite[Th.\,13.1.2]{Swa} that $H$ is reduced, 
hence $H = O_k(G)$ is the Hopf algebra of $k$-valued functions on some finite group~$G$. 
For the dual Hopf algebra~$H^*=k[G]$, the pointedness of~$H$ means that any
simple $H^*$-module is one-dimensional, which is only possible if $G$ is abelian.
By Fourier transform, we have $H = k[\widehat{G}]$, where $\widehat{G}$ is the group of characters of~$G$.
\end{proof}

Given a Hopf algebra~$H$, let~$H_{\ab}$ be the largest commutative Hopf algebra quotient of~$H$
and let $q: H \to H_{\ab}$ be the canonical Hopf algebra surjection.

\begin{lemma}\label{lemma:retraction}
Let $H$ be a finite-dimensional pointed Hopf algebra. Assume that $H_{\ab}$ is a group algebra. 
Then there exists a right $k[G(H)]$-module coalgebra retraction $\gamma : H \to k[G(H)]$ of the 
inclusion $k[G(H)] \subset H$ such that the composite
\[
H \overset{\gamma}{\longrightarrow} k[G(H)] \overset{q |_{k[G(H)]}}{\longrightarrow} H_{\ab}
\]
coincides with~$q$. 
\end{lemma}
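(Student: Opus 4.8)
The plan is to build $\gamma$ by induction along the coradical filtration $k[G] = H_0 \subseteq H_1 \subseteq \cdots \subseteq H_m = H$, where I abbreviate $G := G(H)$ and $K := k[G]$. Two preliminary observations drive the argument. First, each $H_n$ is stable under right multiplication by~$K$, since right multiplication by a group-like element is a coalgebra automorphism and so preserves the coradical filtration. Second, $q$ restricts to a \emph{surjection} $H_n \to H_{\ab}$ for every~$n$: as $H_{\ab} = k[G(H_{\ab})]$ is a group algebra it is cosemisimple, so $(H_{\ab})_0 = H_{\ab}$; and $q$ maps $G(H)$ onto $G(H_{\ab})$ (a group-like element cannot lie in $\ker q$ because $\eps$ vanishes there, and one lifts a group-like along a surjection of pointed coalgebras inside the preimage subcoalgebra), so already $q(H_0) = q(K) = k[G(H_{\ab})] = H_{\ab}$, hence $q(H_n) = H_{\ab}$ for all~$n$. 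This last point is exactly what will let the condition $q|_K \circ \gamma = q$ be propagated through the induction: the part of~$H_n$ not lying in~$H_{n-1}$ carries no information under~$q$ that is not already visible on~$H_{n-1}$.

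The base step is $\gamma_0 := \id_K$. For the passage $H_0 \subseteq H_1$ I would use the Taft--Wilson description of a pointed coalgebra (see~\cite{Mo}): one can write $H_1 = K \oplus \bigoplus_{g,h \in G} P'_{g,h}$, where $P'_{g,h}$ is a complement of $k(g-h)$ inside the space of $(g,h)$-primitive elements, and --- since $G$ acts freely on ordered pairs of its elements by right translation --- the complements can be chosen compatibly, $P'_{g,h}\cdot g' = P'_{gg',hg'}$. For $x \in P'_{g,h}$ one then sets $\gamma_1(x) := \mu(x)\,(g-h) \in K$, where the scalar $\mu(x)$ is characterised by $q(x) = \mu(x)\,(q(g)-q(h))$; this makes sense because $q(x)$ is a $(q(g),q(h))$-primitive element of the group algebra $H_{\ab}$, hence a scalar multiple of $q(g)-q(h)$, the scalar being unique when $q(g)\neq q(h)$ and irrelevant (take $\mu(x)=0$) when $q(g)=q(h)$, since then $q(x)=0$. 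Using $\Delta x = x\otimes h + g\otimes x$ one checks directly that $\gamma_1$ is compatible with~$\Delta$ and~$\eps$, that it is right $K$-linear (because $q(xg')=q(x)q(g')$ forces $\mu(xg')=\mu(x)$), and that $q|_K\circ\gamma_1 = q|_{H_1}$ by construction. This settles the first level and is the template for the general step.

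For the step from $H_{n-1}$ to~$H_n$, suppose given a right $K$-linear $\gamma_{n-1} : H_{n-1}\to K$ with $\Delta\circ\gamma_{n-1} = (\gamma_{n-1}\otimes\gamma_{n-1})\circ\Delta$ and $\eps\circ\gamma_{n-1} = \eps$ on~$H_{n-1}$, and with $q|_K\circ\gamma_{n-1} = q|_{H_{n-1}}$. One exploits that $\Delta(H_n)\subseteq H_0\otimes H_n + H_n\otimes H_0 + H_{n-1}\otimes H_{n-1}$ --- equivalently, one works in the associated graded Hopf algebra $\gr H$, whose coradically graded structure is the bosonization $R \# k[G]$ of a graded braided Hopf algebra~$R$ by $k[G]$ --- so that on the ``extreme'' pieces of~$\Delta x$ the value $\gamma_n(x)$ is forced by the $G$-module-coalgebra constraints together with $\gamma_{n-1}$ applied to the $H_{n-1}\otimes H_{n-1}$ part. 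The content is to verify that this prescription is consistent, i.e.\ well defined as a linear map $H_n\to K$, and that the remaining freedom can be used to arrange $q|_K\circ\gamma_n = q|_{H_n}$: the latter is possible precisely because $q|_{H_{n-1}}$ is already onto, so that $H_n = H_{n-1} + (\ker q\cap H_n)$ and $\gamma_n$ may be corrected by a map that vanishes on~$H_{n-1}$ and takes values in $\ker(q|_K)$, without disturbing the coproduct identity. Taking $\gamma := \gamma_m$ then yields a map $H\to K$ with values in~$K$ restricting to $\id_K$ on $H_0 = K$; it is by construction a retraction of right $k[G]$-module coalgebras with $q|_{k[G]}\circ\gamma = q$.

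I expect the genuinely delicate point to be this inductive step: producing, at each level, a \emph{well-defined} extension that is simultaneously right $K$-linear, coalgebra-compatible, and $q$-compatible. This forces the auxiliary choices ($G$-stable complements of $H_{n-1}$ in~$H_n$, and of the primitive complements $P'_{g,h}$ at the first level) to be made $G$-equivariantly, which needs care when $k[G]$ is not semisimple, i.e.\ when $\mathrm{char}\,k$ divides the order of~$G$; here one falls back on the Nichols--Zoeller theorem, so that $H$ is at least free as a right $k[G]$-module. By contrast, the compatibility with~$q$ is the soft ingredient, being a formal consequence of the surjectivity of each~$q|_{H_n}$.
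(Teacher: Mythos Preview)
Your overall strategy---building $\gamma$ inductively along the coradical filtration while maintaining right $K$-linearity, the coalgebra condition, and $q$-compatibility simultaneously---is plausible, and your treatment of the step $H_0\subset H_1$ is essentially what the paper does (adjusting skew-primitives by the appropriate multiple of a group-like difference). But the inductive step for $n\geq 2$ is where your argument remains a sketch, and the paper takes a different route precisely to avoid it.

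The paper separates the two tasks. First, it invokes a known result (\cite[Lemma~4.2]{Ma3}) that \emph{some} right $k[G]$-module coalgebra retraction $\gamma:H\to k[G]$ exists, using only that $k[G]$ is cosemisimple \emph{as a coalgebra}; so there is no characteristic obstruction here, and your worry about Maschke and Nichols--Zoeller is misplaced for this part. Via this $\gamma$ one identifies $H$ with a smash coproduct $R\rbtimess k[G]$, where $R=H/H\,k[G]^+$, and the condition $q|_{k[G]}\circ\gamma=q$ becomes ``$q$ vanishes on $R^+$''. The key observation is then purely graded: since $H_{\ab}$ is cosemisimple, $q$ is a filtered map, and $\gr q:\gr H\to H_{\ab}$ vanishes in positive degrees; hence $q(R^+_n)=q(R^+_1)$ for all $n$, and it suffices to arrange $q(R^+_1)=0$. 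That is done exactly as in your $H_1$ step, and then a separate extension theorem (\cite[Th.~4.1]{Ma3}) lifts the adjusted retraction from $H_1$ to all of~$H$. No correction at higher levels is needed.

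By contrast, your inductive step both (a) asserts without proof that the coalgebra retraction extends from $H_{n-1}$ to $H_n$---this is exactly the content of the cited extension theorem and is not a formality---and (b) proposes to correct $\gamma_n$ for $q$-compatibility at each level ``without disturbing the coproduct identity''. Claim~(b) is not justified: a correction $\phi:H_n\to K$ vanishing on $H_{n-1}$ still interacts with the $H_0\otimes H_n + H_n\otimes H_0$ terms of $\Delta$, so $(\gamma_n+\phi)$ being a coalgebra map does not follow formally from $\gamma_n$ being one. In fact the paper's graded argument shows that step~(b) is \emph{unnecessary}: once $q$-compatibility holds on $H_1$, it holds automatically everywhere for any coalgebra retraction extending the one on $H_1$. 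The right fix for your argument is therefore to drop the per-level corrections, keep your $H_1$ construction, and either cite or prove the extension theorem for right $k[G]$-module coalgebra retractions.
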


By Lemma\,\ref{lem:com} and the fact that any Hopf algebra quotient of a pointed Hopf algebra is pointed,
the assumption in the previous lemma is always satisfied when the base field is of characteristic zero. 

\begin{proof}
Set $G = G(H)$. The quotient $R = H/H(k[G])^+$ is a quotient coalgebra of~$H$.  
Let $H \to R \, ; h \mapsto \bar{h}$ denote the quotient coalgebra map. 
By the cosemisimplicy of $k[G]$ it follows from \cite[Lemma~4.2]{Ma3} (or \cite[Cor.~3.11]{Ma4})
that there is a right $k[G]$-module coalgebra retraction $\gamma : H \to k[G]$ of the 
inclusion.  Then the map
\begin{equation}\label{lambda}
\lambda : R \to k[G] \otimes R \, ; \quad \lambda(\bar{h}) 
= \gamma(h_{1}) \, S(\gamma(h_{3})) \otimes \bar{h}_{2} 
\end{equation}
defines a left $k[G]$-comodule coalgebra structure on~$R$,
and the associated smash-coproduct coalgebra
$R \rbtimess k[G]$ is isomorphic to $H$ via the unit-pre\-serv\-ing, right $k[G]$-module coalgebra isomorphism
\begin{equation}\label{xi} 
\xi : H \overset{\cong}{\longrightarrow} R \rbtimess k[G] \, ; \quad \xi (h) = \bar{h}_{1}\otimes \gamma(h_{2}) \, . 
\end{equation}
Note that this kind of isomorphism from $H$ to a smash-coproduct of $R$ by $k[G]$ arises, in this manner, 
uniquely from a retraction $H \to k[G]$ as above. 
Identify $H$ with $R \rbtimess k[G]$ via the isomorphism\,\eqref{xi}.
Since $\gamma$ is then identified with the quotient $H \to H/R^+k[G]=k[G]$, one sees that
$q|_{k[G]}\circ \gamma = q$ if and only if 
\begin{equation}\label{vanish}
q\ \text{vanishes on}\ R^+= R^+\otimes k \, . 
\end{equation}
We will now choose a new retraction~$\gamma$ so that this last condition is satisfied. 

Denote the coradical filtrations on~$H$ and on~$R$ respectively by
\[ 
k[G] =H_0 \subset H_1 \subset H_2 \subset \dots, \qquad k =R_0 \subset R_1 \subset R_2 \subset \dots .
\]
Note that each~$H_n$ is a right $k[G]$-module subcoalgebra in~$H$, and the isomorphism\,\eqref{xi} restricts to 
\begin{equation*}
H_n \overset{\cong}{\longrightarrow} R_n \rbtimess k[G], \quad n=0,1,\dots. 
\end{equation*}
On the associated gradings, there is an induced isomorphism of graded right $k[G]$-module coalgebras. 
It coincides with the canonical isomorphism of graded Hopf algebras 
\begin{equation*}
\gr H = \bigoplus_{n \geq 0} \, H_n/H_{n-1} \overset{\cong}{\longrightarrow} \gr \, R \rdtimess k[G]\, ,
\qquad
\gr R = \bigoplus_{n \geq 0} \, R_n/R_{n-1}
\end{equation*}
onto the biproduct $\gr R \rdtimess k[G]$ (see\,\cite[Th.\,3]{Ra})
which arises from the projection $\gr H\to H_0 = k[G]$.
Indeed, the composition of the induced isomorphism with $\varepsilon\otimes \id$ coincides with the projection.
For the biproduct we naturally identify the graded coalgebra~$\gr R$ with $\gr H/(\gr H)\, k[G]^+$,
and thereby regard it as a graded Hopf algebra object in the braided category
of Yetter-Drinfeld modules over $k[G]$. 

Since $H_{\ab}$ is cosemisimple by the assumption, $q: H \to H_{\ab}$ is a filtered Hopf algebra map, 
where $H_{\ab}$ is trivially coradically filtered, meaning that $(H_{\ab})_n = H_{\ab}$ for all $n\geq 0$. 
Hence, $q$ induces a graded Hopf algebra map $\gr q : \gr H \to H_{\ab}= H_{\ab}(0)$
which vanishes in positive degrees. It follows that $q(R^+_1)=q(R^+_n)$ for all $n >0$. 
Therefore, for \eqref{vanish} to hold, it is enough for the following to hold:
\begin{equation}\label{vanish_on_R_1}
q\ \text{vanishes on}\ R_1^+ = R_1 \cap R^+.
\end{equation}

Note that $R_1^+$ consists of all primitive elements in the coalgebra~$R$. 
It has a basis $v_1,\dots,v_r$ such that
$\lambda(v_i) = g_i\otimes v_i$ for some $g_i \in G$; see\,\eqref{lambda}. 
Note that $v_i$ is $(1,g_i)$-primitive in $H$, namely $\Delta(v_i) = g_i \otimes v_i + v_i \otimes 1$ on~$H$. 
Since $q(v_i)$ is $(1,q(g_i))$-primitive in the group algebra~$H_{\ab}$, 
we have $q(v_i) = c_i \, (1 - q(g_i))$ for some $c_i \in k$. 
Define $w_i = v_i - c_i \, (1-g_i)$; this element remains $(1,g_i)$-primitive. 
Note that
\begin{equation}\label{vanish_on_w_i}
q(w_i) = 0 \, ,\quad 1 \leq i \leq r \, . 
\end{equation}
We see that $v_i \mapsto w_i$ and $1 \mapsto 1$ give rise to a unit-preserving,
right $k[G]$-module coalgebra isomorphism 
\[ 
R_1 \rbtimess k[G] \overset{\cong}{\longrightarrow} H_1 \, . 
\]
By \cite[Th.~4.1]{Ma3} (or \cite[Th.~3.10]{Ma4})
the corresponding retraction $H_1 \to k[G]$ of $k[G] \subset H_1$, 
namely the inverse of the last isomorphism composed with 
$\varepsilon \otimes \mathrm{id} : R_1 \rbtimess k[G] \to k[G]$ 
extends to a right $k[G]$-module coalgebra retraction $H \to k[G]$ of $k[G]\subset H$. 
Choose the latter retraction as the new~$\gamma$. 
Then one sees from~\eqref{vanish_on_w_i} that the desired condition~\eqref{vanish_on_R_1}
is satisfied.
\end{proof}

Let $A$ be a \emph{commutative pointed} Hopf algebra containing a left coideal subalgebra~$B$. 
We denote by~$Q$ the quotient Hopf algebra $A/(B^+)$, where $(B^+)$ is the ideal 
of~$A$
generated by $B^+ = B \cap A^+$.
The natural projection turns~$A$ into a right $Q$-comodule algebra.
Let $G(B) = G(A) \cap B$ be the monoid consisting of all group-like elements of~$A$ contained in~$B$. 

\begin{lemma}\label{lem-AB}
With the previous notation,
the left coideal subalgebra $A^{\co-Q}$ consisting of the right $Q$-coinvariant elements of~$A$
is the localization $B [ \, g^{-1} \, | \; g \in G(B) ]$ of~$B$ by~$G(B)$. 
\end{lemma}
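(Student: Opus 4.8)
The statement is an identity of subalgebras of the commutative pointed Hopf algebra $A$: we must show $A^{\co\text{-}Q} = B[g^{-1} \mid g \in G(B)]$, where $Q = A/(B^+)$. The plan is to prove the two inclusions separately, the easy one first. For ``$\supseteq$'': every $b \in B$ maps to an element of $B/B^+ \cong k$ inside $Q$, hence the coaction $\delta : A \to A \otimes Q$ sends $b$ to $\bar{b} \otimes 1 = \eps(b)\,1 \otimes 1$... but that is too strong; what is actually true is that $B$ is a left coideal subalgebra, so $\Delta(B) \subseteq A \otimes B$, and composing with $A \otimes A \to A \otimes Q$ we get $\delta(b) = b_1 \otimes \overline{b_2}$ with $\overline{b_2} \in$ image of $B$ in $Q$, which is $k$; thus $\delta(b) = b_1 \eps(b_2) \otimes 1 = b \otimes 1$. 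So $B \subseteq A^{\co\text{-}Q}$. Since a group-like $g \in G(B)$ is invertible in $A$ (as $A$ is a Hopf algebra, $g^{-1} = S(g)$) and $A^{\co\text{-}Q}$ is a subalgebra closed under the inverses it contains, we get $B[g^{-1} \mid g \in G(B)] \subseteq A^{\co\text{-}Q}$.

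For the reverse inclusion ``$\subseteq$'', the key structural input is faithful flatness / the fundamental theorem of Hopf modules in the commutative case. The plan is to use that $A$ is a \emph{commutative} pointed Hopf algebra, so $A \twoheadrightarrow Q$ is a quotient of commutative Hopf algebras; dually this is a closed immersion of affine group schemes $\Spec Q \hookrightarrow \Spec A$, and $\Spec A^{\co\text{-}Q}$ is the quotient $\Spec A / \Spec Q$ — here one invokes that $A$ is faithfully flat over $A^{\co\text{-}Q}$ (Takeuchi's theorem for commutative Hopf algebras, or directly the fact that quotients of commutative affine group schemes by closed subgroup schemes are affine with $A$ faithfully flat over the invariants). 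Concretely: set $C = A^{\co\text{-}Q}$ and $B' = B[g^{-1} \mid g \in G(B)]$. We have $B' \subseteq C$ from the first half. Both $C$ and $B'$ are left coideal subalgebras of $A$ with $A/(C^+) \cong Q$; the first isomorphism is standard, and we must check $A/(B'^+) \cong Q$ too, which holds because inverting group-likes in $B$ does not change $B^+$-generated ideal modulo... more precisely $(B'^+) = (B^+)$ as ideals of $A$ since each $g \in G(B)$ already satisfies $g - 1 \in B^+$ and $g$ is a unit, so $g^{-1} - 1 = -g^{-1}(g-1) \in (B^+)$. Hence $Q = A/(B'^+) = A/(B^+)$, and then faithful flatness of $A$ over $B'$ — which follows because $A$ is faithfully flat over $C$ and, once we show $A$ is faithfully flat over $B'$, a standard argument (e.g.\ \cite[Th.\ 3]{Ra}-type reasoning or the Hopf-module freeness) forces $C = B'$ from the equality of the quotient coalgebras $A/(C^+) = A/(B'^+) = Q$.

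**The main obstacle.** The genuinely delicate point is the last step: deducing $C = B'$ from the equality $A/(C^+) = A/(B'^+)$ together with $B' \subseteq C$. The clean way is: since $A$ is faithfully flat as a left $B'$-module (this needs proof — I would derive it from cosemisimplicity arguments or from the fact that $B'$ is obtained from the cosemisimple coalgebra situation, or more robustly from Masuoka's results on coideal subalgebras of commutative Hopf algebras used elsewhere in the paper), the functor $M \mapsto M/B'^+ M = M \otimes_{B'} k$ is faithfully exact on $A$-modules, and $A \otimes_{B'} C \to A \otimes_{B'} A = A \otimes Q$ composed appropriately identifies $C/B'^+C$ with $k$; faithful flatness then upgrades $B' \hookrightarrow C$ to an isomorphism. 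I would organize the write-up around: (i) the easy inclusion and the computation $(B'^+) = (B^+)$; (ii) identifying $A^{\co\text{-}Q}$ as a left coideal subalgebra with $A/((A^{\co\text{-}Q})^+) = Q$; (iii) invoking faithful flatness of $A$ over each of $B'$ and $C = A^{\co\text{-}Q}$; (iv) concluding $B' = C$ by comparing via the faithfully flat base change to $A$. The bookkeeping around left \emph{coideal} subalgebras (as opposed to Hopf subalgebras) is where care is needed, but commutativity of $A$ makes all the relevant flatness statements available.
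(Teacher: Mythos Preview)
Your approach is correct in outline but takes a considerably longer route than the paper's.  The paper's proof is two lines: it invokes \cite[Th.\,1.3]{Ma1}, which characterizes $A^{\co\text{-}Q}$ (for $A$ commutative pointed and $Q=A/(B^+)$) as the \emph{smallest} left coideal subalgebra of~$A$ containing~$B$ whose monoid of group-likes is a group, and then simply observes that $B[g^{-1}\mid g\in G(B)]$ is exactly that smallest object.  No separate faithful-flatness argument is written out.

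Your plan --- easy inclusion, then $(B'^+)A=(B^+)A$, then faithful flatness of $A$ over $B'$, then Takeuchi's correspondence to recover $B'=A^{\co\text{-}Q}$ --- is valid, and your computation of the easy inclusion and of $(B'^+)A=(B^+)A$ is fine.  But the step you flag as the ``main obstacle'' really is the whole difficulty: proving that $A$ is faithfully flat over the left coideal subalgebra~$B'$ is precisely the non-trivial content, and the place you would go for it is Masuoka's \cite{Ma1}, i.e.\ the same source the paper cites.  So both arguments bottom out in the same external input; the paper just quotes the ready-made characterization rather than reconstructing it through flatness plus the Takeuchi correspondence \cite[Th.\,3]{Ta2}.  (Your pointer to ``\cite[Th.\,3]{Ra}-type reasoning'' is misplaced: Radford's paper concerns Hopf algebras with a projection; the result you want is Takeuchi's, which the paper uses elsewhere in the proof of Proposition\,\ref{B=C}.)

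What your longer route buys is that the easy inclusion $B'\subseteq A^{\co\text{-}Q}$ and the equality $(B'^+)A=(B^+)A$ are made completely explicit, whereas the paper leaves the verification that $B'$ is the minimal coideal subalgebra with the group-like property to the reader.  What the paper's route buys is brevity: once one accepts \cite[Th.\,1.3]{Ma1}, the lemma is immediate.
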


\begin{proof}
By\,\cite[Th.\,1.3]{Ma1}, $C = A^{\co-Q}$ is the smallest left coideal subalgebra of~$A$ containing~$B$
such that $G(C) = G(A) \cap C$ is a group. 
One easily checks that $B[ \, g^{-1} \, | \, g \in G(B) ]$ is such a left coideal subalgebra. 
\end{proof}

\section{\large The generic algebra associated to a Hopf algebra}\label{sec-generic}

We now introduce the objects needed to state the generalized Noether problem 
in Section\,\ref{sec-Noether-pb}.

\subsection{The free commutative Hopf algebra generated by a coalgebra}\label{subsec-Takeuchi}

By Ta\-keu\-chi\,\cite[Chap.\,IV]{Ta}, given a coalgebra~$C$, there is (up to isomorphism)
a unique commutative Hopf algebra~$S(t_C)_{\Theta}$ together with a coalgebra map
\[
t: C \to S(t_C)_{\Theta}
\]
such that 
for any coalgebra map $f: C \to H'$ from~$C$ to a commutative Hopf algebra~$H'$,
there is a unique Hopf algebra map $\tilde f : S(t_C)_{\Theta} \to H'$ such that
$f = \tilde f \circ t $. The commutative Hopf algebra~$S(t_C)_{\Theta}$ is called
the \emph{free commutative Hopf algebra} generated by~$C$.

Let us give an explicit construction of~$S(t_C)_{\Theta}$.
Pick a copy~$t_C$ of the underlying vector space of~$C$
and denote the identity map from $C$ to~$t_C$ by $x\mapsto t_x$ ($x\in C$).
Let $S(t_C)$ be the symmetric algebra over the vector space~$t_C$.
By~\cite[Lemma~A.1]{AK} there is a unique linear map $x \mapsto t^{-1}_x$
from~$C$ to the field of fractions~$\Frac S(t_C)$ of~$S(t_C)$ such that for all $x\in C$,
\begin{equation*}\label{tt}
t_{x_1} \, t^{-1}_{x_{2}} = t^{-1}_{x_{1}} \,  t_{x_{2}} = \eps(x) \, 1 \, .
\end{equation*}
Then $S(t_C)_{\Theta}$ is the subalgebra of~$\Frac S(t_C)$ generated 
by all elements~$t_x$ and~$t^{-1}_x$, where $x$ runs over~$C$. 
The coproduct, counit, antipode of~$S(t_C)_{\Theta}$ are
determined for all $x\in C$ by
\begin{equation}\label{coproduct-comm-coalg}
\Delta(t_x) = t_{x_1} \otimes t_{x_2} \quad\text{and}\quad \Delta(t^{-1}_x) = t^{-1}_{x_2} \otimes t^{-1}_{x_1} \, ,
\end{equation}
\begin{equation}\label{counit-comm-coalg}
\eps(t_x) = \eps(t^{-1}_x) = \eps(x) \, ,
\end{equation}
\begin{equation}\label{ant-comm-coalg}
S(t_x) = t^{-1}_x \quad\text{and}\quad 
S(t^{-1}_x) = t_x \, .
\end{equation}
The map $t: C \to S(t_C)_{\Theta}$ is defined by $xÊ\mapsto t_x$ ($x\in C$).

By \cite[Th.\,61 and Cor.\,64]{Ta} the algebra~$S(t_C)_{\Theta}$ is a localization of~$S(t_C)$.
More precisely, if $(D_{\alpha})_{\alpha}$ is a family of finite-dimensional subcoalgebras of~$C$ 
such that $\sum_{\alpha}\, D_{\alpha}$ contains the coradical of~$C$, 
then $S(t_C)_{\Theta}$ is obtained from~$S(t_C)$ 
by inverting certain group-like elements~$\Theta_{\alpha} \in S(t_{D_{\alpha}}) \subset S(t_C)$:
\begin{equation}\label{eq-loc}
S(t_C)_{\Theta} =  S(t_C)\left [ \left(\frac{1}{\Theta_{\alpha}}\right)_{\!\alpha} \, \right] \, .
\end{equation}

In the sequel we shall need the following lemma.

\begin{lemma}\label{lem-pointed}
If the coalgebra~$C$ is pointed, then so is~$S(t_C)_{\Theta}$.
\end{lemma}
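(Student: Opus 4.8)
The plan is to reduce the statement to the behaviour of the coradical under the passage from $S(t_C)$ to the localization $S(t_C)_{\Theta}$, and to use the explicit localization formula \eqref{eq-loc}. First I would recall that a commutative Hopf algebra is pointed precisely when its coradical is spanned by group-like elements, equivalently when every simple subcoalgebra is one-dimensional; so it suffices to control the simple subcoalgebras of $S(t_C)_{\Theta}$. Since $S(t_C)_{\Theta}$ is, by \eqref{eq-loc}, a localization of the symmetric algebra $S(t_C)$ at a multiplicative set generated by group-like elements $\Theta_{\alpha}$, and localization of a coalgebra at group-likes does not create new simple subcoalgebras (it only inverts grouplikes and leaves the comultiplication formulas \eqref{coproduct-comm-coalg}--\eqref{ant-comm-coalg} in place), the problem reduces to showing that $S(t_C)$ itself is pointed whenever $C$ is pointed.

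Next I would analyze $S(t_C)$. The linear map $t: C \to S(t_C)$, $x \mapsto t_x$, is a coalgebra map, hence sends the coradical $C_0$ of $C$ into the coradical of $S(t_C)$. Because $C$ is pointed, $C_0 = k[G(C)]$ is spanned by the group-like elements of $C$, and $t_g$ is group-like in $S(t_C)$ for every $g \in G(C)$ by \eqref{coproduct-comm-coalg} and \eqref{counit-comm-coalg}. The symmetric algebra $S(t_C)$ is filtered by the coradical-type filtration coming from $C$: writing $C = \bigcup_n C_n$ for the coradical filtration, $S(t_C)$ is generated as an algebra by the $t_x$ with $x \in C$, and a product $t_{x^{(1)}} \cdots t_{x^{(m)}}$ with each $x^{(i)} \in C_{n_i}$ lies, by the Sweedler formula for $\Delta$ on a product, in a subcoalgebra whose iterated coproducts can be pushed into lower filtration degrees plus grouplike terms. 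More cleanly: $S(t_C)$ is a quotient (as a coalgebra) of the tensor coalgebra, or better, it is the image of the cofree-type construction, and the key point is that the coalgebra $S(t_C)$ is a union of the subcoalgebras $S(t_{D})$ for $D$ a finite-dimensional subcoalgebra of $C$; if each such $D$ is pointed (being a subcoalgebra of a pointed coalgebra) and each $S(t_D)$ is finitely generated, one reduces to the case $C$ finite-dimensional and pointed, where $S(t_C)$ is a commutative finitely generated Hopf algebra and one can argue with the coradical directly.

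The cleanest route, and the one I would actually write up, is via the symmetric-algebra structure: $S(t_C) = \bigoplus_{m \geq 0} S^m(t_C)$ is a graded coalgebra with $S^0 = k$ and $S^m(t_C)$ a subquotient coalgebra of $t_C^{\otimes m}$. The coradical of a tensor product of coalgebras is contained in the tensor product of the coradicals (this is a standard fact, using that the tensor product of cosemisimple coalgebras is cosemisimple in characteristic-free generality only when one of them is — so one must be careful), so one argues that the coradical of $S(t_C)$ is contained in the subalgebra generated by $t_{C_0} = k[t_g \mid g \in G(C)]$, which is a group algebra and hence has one-dimensional simple subcoalgebras; every simple subcoalgebra of $S(t_C)$ therefore lies in a pointed sub-Hopf-algebra and is one-dimensional. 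I expect the \emph{main obstacle} to be exactly this point: controlling the coradical of the symmetric algebra $S(t_C)$ in terms of the coradical of $C$, i.e.\ proving $\mathrm{corad}\, S(t_C) \subseteq k[t_g \mid g \in G(C)]$. In characteristic zero one can invoke that over a field the coradical of $S^m(t_C)$ is built from $\mathrm{corad}(t_C)^{\otimes m} = (k t_{C_0})^{\otimes m}$ together with the known behaviour of coradicals under (co)symmetrization; in general one appeals to Takeuchi's description and the fact that $t: C \to S(t_C)_{\Theta}$ is initial among coalgebra maps to commutative Hopf algebras, combined with the observation that the sub-Hopf-algebra generated by $t(C_0)$ and its inverses is the group algebra $k[G(S(t_C)_{\Theta})]$ and already absorbs the whole coradical. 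Once that inclusion is established, pointedness of $S(t_C)_{\Theta}$ follows immediately, and then pointedness is inherited through the localization \eqref{eq-loc} with no further work.
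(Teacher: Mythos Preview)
Your overall strategy matches the paper's: first show $S(t_C)$ is pointed, then pass to the localization $S(t_C)_\Theta$. The paper executes both steps more crisply than your proposal, and your treatment of the localization step contains a gap.

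For $S(t_C)$, the paper isolates two clean facts---(i) $(C\otimes D)_0 \subset C_0\otimes D_0$ and (ii) for a coalgebra surjection $f:C\to D$ one has $D_0\subset f(C_0)$---and combines them to show that \emph{any} bialgebra generated as an algebra by a pointed subcoalgebra is pointed; applied to $B=S(t_C)$ generated by $t_C\cong C$, this gives the result at once. Your ``cleanest route'' is essentially this argument, but your parenthetical worry (``in characteristic-free generality only when one of them is'') is misplaced: the inclusion $(C\otimes D)_0\subset C_0\otimes D_0$ holds over any field; the subtlety you allude to concerns \emph{equality}, which is not needed here. Also, the subalgebra generated by $\{t_g : g\in G(C)\}$ is not literally a group algebra but the monoid algebra on the free commutative monoid they generate; it is still pointed, so your conclusion stands.

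For the localization, your claim that ``localization at group-likes does not create new simple subcoalgebras'' and that pointedness passes through ``with no further work'' is exactly the point that needs an argument, and you have not given one. The paper supplies a short concrete trick: let $T$ be the multiplicative set of group-likes being inverted, form the pointed coalgebra $k[T^{-1}]$ spanned by formal group-likes $t^{-1}$, note that $S(t_C)\otimes k[T^{-1}]$ is pointed by fact~(i), and then apply fact~(ii) to the coalgebra surjection $S(t_C)\otimes k[T^{-1}]\to S(t_C)_\Theta$, $P\otimes t^{-1}\mapsto Pt^{-1}$. This closes the gap cleanly and avoids any hand-waving about what localization ``does'' to simple subcoalgebras.
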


\begin{proof}
Recall the following facts from\,\cite[Lemma\,5.1.10 and Cor.\,5.3.5]{Mo}.
\begin{itemize}
\item[(i)]
If $C$ and $D$ are coalgebras with respective coradicals $C_0$ and~$D_0$, 
then the coradical $(C \otimes D)_0$ of the tensor product $C\otimes D$ of coalgebras is contained in~$C_0 \otimes D_0$.

\item[(ii)]
Given a coalgebra surjection $f: C\to D$, we have $D_0 \subset f(C_0)$.
\end{itemize}

Note that a bialgebra $B$ generated by a subcoalgebra~$C$ is pointed if $C$ is pointed. 
Indeed, by\,(i), and by\,(ii) applied to the product map $C^{\otimes n} \to C^n$,
we have
\[ 
B_0 = \left( \sum_{n\ge0} \, C^n \right)_0 = \sum_{n\ge0} \, (C^n)_0 \subset \sum_{n\geq 0}\, C_0^n \, . 
\]
Thus, $B_0$ is included in the subalgebra generated by~$C_0$, and is
generated by the group-like elements of~$C$ if $C$ is pointed.
Applying this to $B = S(t_C)$ and $C = t_C$, we deduce that
$S(t_C)$ is pointed if $C$ is pointed.

Now by\,\eqref{eq-loc} the algebra~$S(t_C)_{\Theta}$ is obtained from~$S(t_C)$ 
by inverting a (central) multiplicative subset~$T$ consisting of group-like elements. 
We consider the pointed coalgebra~$k[T^{-1}]$ spanned by the symbols $t^{-1}$, $t \in T$, 
which are supposed to be group-like.
Thus by\,(i) above, the tensor product of coalgebras $S(t_C) \otimes k[T^{-1}]$ is pointed.
We conclude by applying\,(ii) to the coalgebra surjection $S(t_C) \otimes k[T^{-1}] \to S(t_C)_{\Theta}$
given by $P\otimes t^{-1} \mapsto Pt^{-1}$ ($P\in S(t_C), t\in T$).
\end{proof}

\subsection{The $H_{\ab}$-coaction and the subalgebra~$\CC_H$}\label{subsec-coaction}

Let $H$ be a Hopf algebra and $S(t_H)_{\Theta}$ be the free commutative Hopf algebra
generated by the coalgebra underlying~$H$, as defined in the previous subsection.
We denote by $H_{\ab}$ the largest commutative Hopf algebra quotient of~$H$.
Applying the universal property of~$S(t_H)_{\Theta}$ to the canonical surjection of Hopf algebras 
$q: H \to H_{\ab}$, we obtain a unique Hopf algebra surjection 
\begin{equation}\label{qtilde}
\tilde{q} : S(t_H)_{\Theta} \to H_{\ab}
\end{equation}
such that $\tilde{q}(t_x) = q(x)$ and $\tilde{q}(t^{-1}_x) = S(q(x)) = q(S(x))$ for all $x\in H$.
Using~$\tilde{q}$, we may equip $S(t_H)_{\Theta}$ with a right $H_{\ab}$-comodule algebra structure:
its coaction is the algebra map~$\delta_S$ defined as the following composition:
\begin{equation}\label{coaction-SH}
\delta_S : S(t_H)_{\Theta} \overset{\Delta}{\longrightarrow} S(t_H)_{\Theta} \otimes S(t_H)_{\Theta} 
\overset{\id\otimes \tilde{q}}{\longrightarrow} S(t_H)_{\Theta} \otimes H_{\ab} \, .
\end{equation}
We have $\delta_S(t_x) = t_{x_1} \otimes q(x_2)$ for all $x\in H$.

The previous formula shows that $\delta_S$ sends $S(t_H)$ to $S(t_H)\otimes H_{\ab}$, 
which implies that $S(t_H)$ is an $H_{\ab}$-comodule subalgebra of~$S(t_H)_{\Theta}$. 

Let $\CC_H$ be the subalgebra of right $H_{\ab}$-coinvariants of~$S(t_H)_{\Theta}$:
\begin{equation*}\label{def-CH}
\CC_H = S(t_H)_{\Theta}^{\co-H_{\ab}} = \{ a \in S(t_H)_{\Theta} \, | \, \delta_S(a)  = a \otimes 1 \} \, .
\end{equation*}
It contains the subalgebra of right $H_{\ab}$-coinvariants of~$S(t_H)$:
\[
S(t_H)^{\co-H_{\ab}} \subset \CC_H = S(t_H)_{\Theta}^{\co-H_{\ab}} \, .
\]
Since $S(t_H)_{\Theta}$ is a localization of~$S(t_H)$,
we may wonder whether
$\CC_H = S(t_H)_{\Theta}^{\co-H_{\ab}}$ is likewise a localization of~$S(t_H)^{\co-H_{\ab}}$.
The following provides an answer.

\begin{prop}\label{prop-CH-loc}
The algebra $\CC_H$ is a localization of~$S(t_H)^{\co-H_{\ab}}$.
\end{prop}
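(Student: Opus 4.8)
The plan is to use the explicit localization description of $S(t_H)_\Theta$ from \eqref{eq-loc}. Choose a family $(D_\alpha)_\alpha$ of finite-dimensional subcoalgebras of~$H$ whose sum contains the coradical of~$H$; then $S(t_H)_\Theta = S(t_H)[(1/\Theta_\alpha)_\alpha]$ where each $\Theta_\alpha \in S(t_{D_\alpha})$ is group-like. The key point I would first establish is that each $\Theta_\alpha$ can be taken to be $H_{\ab}$-coinvariant, i.e.\ $\Theta_\alpha \in S(t_H)^{\co-H_{\ab}}$, or at least that $\tilde q(\Theta_\alpha)$ is a group-like element of~$H_{\ab}$, so that inverting $\Theta_\alpha$ interacts well with the coaction~$\delta_S$. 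Since $\Theta_\alpha$ is group-like in $S(t_H)_\Theta$, we have $\delta_S(\Theta_\alpha) = \Theta_\alpha \otimes \tilde q(\Theta_\alpha)$, and $\tilde q(\Theta_\alpha)$ is a group-like element of the commutative Hopf algebra $H_{\ab}$, hence invertible there.

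Granting this, the argument runs as follows. Let $T = \{\Theta_\alpha\}$ (or the multiplicative set it generates), a central multiplicative subset of $S(t_H)$ consisting of group-like elements, with $S(t_H)_\Theta = S(t_H)[T^{-1}]$. I claim $\CC_H = S(t_H)_\Theta^{\co-H_{\ab}}$ is the localization of $S(t_H)^{\co-H_{\ab}}$ at a suitable multiplicative subset. The natural candidate is the set $T' \subseteq S(t_H)^{\co-H_{\ab}}$ of coinvariant elements that become invertible in $S(t_H)_\Theta$; one shows $T'$ is nonempty and "big enough" by checking that for each $\alpha$ some power or twist of $\Theta_\alpha$ lies in it — concretely, because $H_{\ab}$ is spanned by group-likes of finite order (it is a finite-dimensional commutative Hopf algebra, hence by Lemma~\ref{lem:com} in characteristic zero a group algebra, and in general at least its image of $\Theta_\alpha$ has finite order after base change, or one argues directly), a suitable power $\Theta_\alpha^{N}$ satisfies $\tilde q(\Theta_\alpha^N) = 1$, so $\Theta_\alpha^N \in S(t_H)^{\co-H_{\ab}}$. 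Then inverting all such $\Theta_\alpha^N$ already inverts each $\Theta_\alpha$, giving a surjection $S(t_H)^{\co-H_{\ab}}[T'^{-1}] \to \CC_H$.

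It then remains to see this map is also injective, equivalently that $\CC_H \subseteq S(t_H)^{\co-H_{\ab}}[T'^{-1}]$: an element $a \in S(t_H)_\Theta$ can be written $a = b/\Theta$ with $b \in S(t_H)$ and $\Theta \in T'$ (taking $\Theta = \prod \Theta_\alpha^{N_\alpha}$ a coinvariant common denominator); if moreover $\delta_S(a) = a\otimes 1$, then from $\delta_S(\Theta) = \Theta \otimes 1$ we get $\delta_S(b) = b \otimes 1$, so $b \in S(t_H)^{\co-H_{\ab}}$ and hence $a \in S(t_H)^{\co-H_{\ab}}[T'^{-1}]$. This uses only that $\delta_S$ is an algebra map and that $\Theta$ is invertible, so the computation $\delta_S(b) = \delta_S(a\Theta) = \delta_S(a)\delta_S(\Theta) = (a\otimes 1)(\Theta\otimes 1) = b \otimes 1$ is immediate. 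Combining surjectivity and injectivity yields $\CC_H = S(t_H)^{\co-H_{\ab}}[T'^{-1}]$, a localization of $S(t_H)^{\co-H_{\ab}}$ as desired.

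The main obstacle I anticipate is the first step: arranging that the localizing elements $\Theta_\alpha$ (or appropriate powers) are genuinely $H_{\ab}$-coinvariant, rather than merely having group-like image under~$\tilde q$. This hinges on controlling the order of the group-like elements $\tilde q(\Theta_\alpha)$ in $H_{\ab}$; in characteristic zero one can invoke Lemma~\ref{lem:com} to say $H_{\ab}$ is a group algebra $k[\widehat{G}]$ with $\widehat G$ finite, so every group-like has finite order and a uniform power $N = |\widehat G|$ works, but one should phrase the argument so it does not secretly require $H$ itself to be pointed. An alternative that sidesteps order considerations is to localize $S(t_H)^{\co-H_{\ab}}$ not at powers of the $\Theta_\alpha$ but at the "norm" elements $\Theta_\alpha \cdot S(\tilde q(\Theta_\alpha))$-type coinvariant combinations, or simply to observe that $S(t_H)^{\co-H_{\ab}}[T'^{-1}] \to \CC_H$ can be checked to be a localization using flatness of localization and the coinvariants functor commuting with flat base change; I would present whichever is cleanest, likely the power argument since $H_{\ab}$ is finite-dimensional.
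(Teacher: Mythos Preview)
Your overall architecture matches the paper's exactly: once you have coinvariant localizing elements~$\Theta$ (i.e.\ with $\tilde q(\Theta)=1$), the computation $\delta_S(b)=\delta_S(a)\delta_S(\Theta)=b\otimes 1$ is precisely what the paper does. The issue is entirely in your ``first step''.

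Your main proposal for producing coinvariant $\Theta$'s is to take powers $\Theta_\alpha^N$, using that $\tilde q(\Theta_\alpha)$ is a group-like of finite order in~$H_{\ab}$. But Proposition~\ref{prop-CH-loc} carries no finite-dimensionality hypothesis on~$H$: it is stated for an arbitrary Hopf algebra, and $H_{\ab}$ can be infinite-dimensional with group-likes of infinite order (e.g.\ $H=k[\ZZ]$). Your appeal to Lemma~\ref{lem:com} is illegitimate here, since that lemma assumes finite-dimensionality, and your closing remark ``since $H_{\ab}$ is finite-dimensional'' is simply false in this generality. So the power argument does not go through.

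The paper's Lemma~\ref{lem-CH-loc} avoids finiteness entirely by an antipode trick. Recall that $\Theta_\alpha=\det M$ with $M=(t_{x_{i,j}})$ a comatrix; set $\Theta'_\alpha=\det M'$ where $M'$ is the transpose of $(t_{S(x_{i,j})})$. Then $\tilde q$ sends $M$ and $M'$ to the matrices $(q(x_{i,j}))$ and $(q(S(x_{i,j})))$, which are mutual inverses in~$H_{\ab}$ because $S$ is the convolution inverse of~$\id$. Hence $\tilde q(\Theta_\alpha\Theta'_\alpha)=1$ outright, with no order argument. Replacing $D_\alpha$ by $D_\alpha+S(D_\alpha)$ and $\Theta_\alpha$ by $\Theta_\alpha\Theta'_\alpha$ gives coinvariant localizing elements, and then your final paragraph applies verbatim. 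Your vague ``norm elements $\Theta_\alpha\cdot S(\tilde q(\Theta_\alpha))$-type'' alternative is gesturing toward this idea, but the actual construction lives in $S(t_H)$, not in $H_{\ab}$: one must produce a second group-like element of $S(t_H)$ whose $\tilde q$-image is the inverse, and the comatrix/antipode description of~$\Theta_\alpha$ is what makes this possible.
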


For the proof we need the following lemma.

\begin{lemma}\label{lem-CH-loc}
Each element $\Theta_{\alpha}$ as in\,\eqref{eq-loc} may be chosen so that $\tilde{q}(\Theta_{\alpha})=1$.
\end{lemma}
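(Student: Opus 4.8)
The plan is to unwind the explicit construction of the group-like elements $\Theta_\alpha$ given by Takeuchi and track how $\tilde{q}$ acts on them. Recall from~\eqref{eq-loc} that we are free to choose a family $(D_\alpha)_\alpha$ of finite-dimensional subcoalgebras of~$H$ whose sum contains the coradical, and that each $\Theta_\alpha$ is a group-like element of the finite-dimensional subalgebra $S(t_{D_\alpha}) \subset S(t_H)$ obtained as a kind of ``coalgebra determinant'' associated to~$D_\alpha$. Concretely, if $(x^{(1)}, \ldots, x^{(d)})$ is a basis of $D_\alpha$ and $\Delta(x^{(i)}) = \sum_j x^{(j)} \otimes c^{(i)}_j$ expresses the comultiplication in this basis (so the matrix $C = (c^{(i)}_j)$ has entries in~$D_\alpha$ and is ``group-like'' for the matrix coalgebra structure), then $\Theta_\alpha$ is the determinant of the matrix $(t_{c^{(i)}_j})_{i,j}$ in $S(t_{D_\alpha})$; the key point of Takeuchi's argument is that this determinant is group-like and that inverting all such determinants yields exactly $S(t_H)_\Theta$.

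First I would make this choice of $\Theta_\alpha$ explicit and then compute $\tilde{q}(\Theta_\alpha)$. Since $\tilde{q}$ is an algebra map sending $t_x \mapsto q(x)$, we get $\tilde{q}(\Theta_\alpha) = \det\bigl(q(c^{(i)}_j)\bigr)$, the determinant being taken now in the commutative Hopf algebra~$H_{\ab}$. The image of the matrix $C$ under $q$ is again a group-like matrix, but with entries in $q(D_\alpha) \subset H_{\ab}$, and its determinant is therefore an invertible group-like element of~$H_{\ab}$; call it $\theta_\alpha \in G(H_{\ab})$. It need not be equal to~$1$ on the nose. However, the remedy is to absorb it: replace $t_{x}$ by a rescaled generator, or more cleanly, observe that since $\theta_\alpha$ is group-like in~$H_{\ab}$ and $\tilde{q}$ is surjective, there is a group-like element $g_\alpha \in S(t_H)_\Theta$ with $\tilde{q}(g_\alpha) = \theta_\alpha$ — indeed one can take $g_\alpha$ to be a suitable monomial in the $t_x$ and $t_x^{-1}$ coming from a lift of $\theta_\alpha$ along~$q$, using that $\theta_\alpha$ lies in the image of~$q$ restricted to $D_\alpha$ (or a fixed finite-dimensional subcoalgebra). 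Then $\Theta'_\alpha := g_\alpha^{-1}\,\Theta_\alpha$ is still group-like, still lies in a finite-dimensional $S(t_{D'_\alpha})$ for a slightly enlarged $D'_\alpha$, inverting the $\Theta'_\alpha$ still produces $S(t_H)_\Theta$ (since we have only multiplied by units $g_\alpha$ that are themselves already invertible in $S(t_H)_\Theta$, being group-like), and now $\tilde{q}(\Theta'_\alpha) = \theta_\alpha^{-1}\,\theta_\alpha = 1$.

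The step I expect to be the main obstacle is the second one: verifying that after replacing $\Theta_\alpha$ by $\Theta'_\alpha = g_\alpha^{-1}\Theta_\alpha$ the localization~\eqref{eq-loc} is unchanged, i.e.\ that $S(t_H)[\,(1/\Theta'_\alpha)_\alpha\,]$ still equals $S(t_H)_\Theta$. This requires knowing that each $g_\alpha$, being a lift of a group-like element of~$H_{\ab}$, is itself already a product of the original $\Theta_\beta$'s and $t_x$'s up to invertibles — equivalently, that $g_\alpha^{\pm 1} \in S(t_H)_\Theta$ — which follows because every group-like element of $S(t_H)_\Theta$ is invertible in $S(t_H)_\Theta$ (the antipode of a group-like is its inverse) and $g_\alpha$ is visibly a group-like element of $S(t_H)_\Theta$ built from the~$t_x$. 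Once this is in hand, the two localizations contain the same set of inverted elements up to units, hence coincide. The cleanest packaging is therefore: choose the $D_\alpha$ to also contain enough elements so that the required lifts $g_\alpha$ live in $S(t_{D_\alpha})$, form $\Theta'_\alpha$, and check the three bullet points (group-like, same localization, killed by $\tilde q$) in turn; none beyond the localization-invariance is more than a formal verification.
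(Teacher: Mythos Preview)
Your overall strategy---correct each $\Theta_\alpha$ by a group-like factor so that its image under $\tilde q$ becomes~$1$---is the right idea and is what the paper does. The gap lies in how you produce the correcting factor~$g_\alpha$. You write that ``since $\tilde q$ is surjective, there is a group-like element $g_\alpha\in S(t_H)_\Theta$ with $\tilde q(g_\alpha)=\theta_\alpha$,'' and propose taking $g_\alpha$ to be a monomial in the~$t_x$ and~$t_x^{-1}$. But a Hopf algebra surjection need not restrict to a surjection on group-like elements, and $t_x$ is group-like in $S(t_H)_\Theta$ only when $x$ itself is group-like in~$H$; an arbitrary monomial in $t$-variables is not group-like. (The one obvious group-like lift of~$\theta_\alpha$ is $\Theta_\alpha$ itself, which gives the useless $\Theta'_\alpha=1$.) There is a second, related difficulty: for the replacement to be usable in Proposition~\ref{prop-CH-loc} you need $\Theta'_\alpha=g_\alpha^{-1}\Theta_\alpha$ to lie in the polynomial ring~$S(t_H)$, not merely in~$S(t_H)_\Theta$; otherwise your claim that $\Theta'_\alpha\in S(t_{D'_\alpha})$ for some enlarged~$D'_\alpha$ fails, and ``inverting the $\Theta'_\alpha$'' no longer describes a localization of~$S(t_H)$.

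The paper resolves both issues at once by constructing the correction via the antipode of~$H$ rather than by an abstract lift. From the comatrix $(x_{i,j})$ in~$D_\alpha$ one forms $\Theta'_\alpha=\det\bigl(t_{S(x_{i,j})}\bigr)$, which is automatically a group-like \emph{polynomial} lying in $S(t_{S(D_\alpha)})\subset S(t_H)$. Replacing $D_\alpha$ by $D_\alpha+S(D_\alpha)$ and $\Theta_\alpha$ by the product $\Theta_\alpha\Theta'_\alpha$ keeps everything inside~$S(t_H)$. The key computation is then purely algebraic: the matrices $\bigl(q(x_{i,j})\bigr)$ and $\bigl(q(S(x_{i,j}))\bigr)$ are mutual inverses in the commutative Hopf algebra~$H_{\ab}$, so their determinants multiply to~$1$, giving $\tilde q(\Theta_\alpha\Theta'_\alpha)=1$. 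This antipode construction is exactly the explicit $g_\alpha^{-1}$ your argument was missing.
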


\begin{proof}
According to\,\cite[Sect.\,11]{Ta}, $\Theta_{\alpha}$ is given by the determinant of
a square matrix $\big( t_{x_{i,j}} \big)$, where $x_{i,j} \in D_{\alpha}$ and
\begin{equation*}\label{comatric}
\Delta(x_{i,j}) = \sum_{\ell} \, x_{i,\ell} \otimes x_{\ell, j} \quad\text{and}\quad \varepsilon(x_{i,j})=\delta_{i,j} 
\end{equation*}
for all $i,j$. 
These formulas imply that in~$S(t_H)$ the matrix $M = (t_{x_{i,j}})$ satisfies the equations
$\Delta(M) = (M \otimes I)(I\otimes M)$ and $\varepsilon(M)=I$,
where $I$ is an identity matrix. This implies that $\Theta_{\alpha} = \det M$ is group-like, as is well known.

Let us now consider the image~$S(x_{i,j})$ of~$x_{i,j}$ under the antipode of~$H$.  
The transpose~$M'$ of the matrix~$( t_{S(x_{i,j})})$ satisfies the same formulas as above.
Therefore, $\Theta'_{\alpha} = \det M'$ is a group-like element of~$S(t_{S(D_{\alpha})})$. 
We may replace $D_{\alpha}$ by $D_{\alpha} + S(D_{\alpha})$ and
$\Theta_{\alpha}$ by $\Theta_{\alpha} \Theta'_{\alpha}$. 
It remains to prove that $\tilde{q}(\Theta_{\alpha} \Theta'_{\alpha}) = 1$. 
But this follows from the fact that the matrices
\[ 
\left( q(x_{i,j}) \right) = \left( \tilde{q}(t_{x_{i,j}}) \right) 
\quad \text{and}\quad
\left(q(S(x_{i,j})) \right) = \left( \tilde{q}(t_{S (x_{i,j})}) \right)
\]
with entries in $H_{\ab}$ are inverses of each other. 
\end{proof}

\begin{proof}[Proof of Proposition\,\ref{prop-CH-loc}]
Choose $(\Theta_{\alpha})_{\alpha}$ as in Lemma\,\ref{lem-CH-loc}. 
Let $\bar{P}$ be an element of $\CC_H = S(t_H)_{\Theta}^{\co-H_{\ab}}$. 
Then $\bar{P}\, \Theta \in S(t_H)$ for some finite product $\Theta$ of the elements~$\Theta_{\alpha}$;
the element~$\Theta$ is group-like and $\tilde{q}(\Theta) = 1$.
Define $P= \bar{P}\, \Theta$. 
Since $\delta_S(\bar{P}) = \bar{P} \otimes 1$ and
$\delta_S(\Theta) = \Theta \otimes \tilde{q}(\Theta) = \Theta \otimes 1$,
we have $\delta_S(P)=P \otimes 1$, which means that $P$ belongs to~$S(t_H)^{\co-H_{\ab}}$.
\end{proof}

\subsection{The generic base algebra}\label{subsec-generic}

Under some conditions on~$H$, the commutative algebra $\CC_H$ has an alternative description,
which we now present.

For any pair $(x,y)$ of elements of a Hopf algebra~$H$ consider the following elements of~$S(t_H)_{\Theta}$:
\begin{equation}\label{sigma-def}
\sigma(x,y) = t_{x_1} \, t_{y_1} \, t^{-1}_{x_2 y_2} 
\quad\text{and}\quad
\sigma^{-1}(x,y) = t_{x_1 y_1} \, t^{-1}_{x_2}  \, t^{-1}_{y_2}\, .
\end{equation} 

Following~\cite[Sect.~5]{AK} and~\cite[Sect.~3]{Ka1},
we define the \emph{generic base algebra}~$\BB_H$ attached to the Hopf algebra~$H$
to be the subalgebra of~$S(t_H)_{\Theta}$ 
generated by all elements~$\sigma(x,y)$ and~$\sigma^{-1}(x,y)$.
Since $\BB_H$ sits inside~$S(t_H)_{\Theta}$, it is a domain.

The generic base algebra~$\BB_H$ is the algebra of coinvariants of a right $H$-comodule algebra~$\AA_H$
parametrizing the ``forms'' of~$H$ (for details, see~\cite{AK, Ka1} and also Section\,\ref{subsec-pb2} of this paper).

If~$H$ is \emph{finite-dimensional}, then by Theorem\,3.6 and Corollary\,3.7 of~\cite{KM}
the following holds:

\begin{itemize}
\item[(a)]
$\BB_H$ is a finitely generated smooth Noetherian domain of Krull dimension equal to~$\dim\,  H$;

\item[(b)]
the embedding $\BB_H \subset S(t_H)_{\Theta}$ turns $S(t_H)_{\Theta}$ into a finitely generated projective $\BB_H$-module.
\end{itemize}

We now relate $\BB_H$ to the algebra~$\CC_H$ of $H_{\ab}$-coinvariants introduced in
the previous subsection.

\begin{prop}\label{B=C} 
Let $H$ be a Hopf algebra.

(a) We have $\BB_H \subset \CC_H$.

(b) The equality $\BB_H = \CC_H$ holds if one of the following conditions is satisfied:
\begin{itemize}
\item[(i)] $H$ is finite-dimensional;

\item[(ii)] $H$ is cocommutative;

\item[(iii)] $H$ is pointed and each element of the kernel of the natural homomorphism
$G(H)_{\ab} \to G(H_{\ab})$ is of finite order;

\item[(iv)] $H$ is commutative.
\end{itemize}
\end{prop}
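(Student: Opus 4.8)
The plan is to establish part~(a) directly from the defining formulas, and then to reduce each case of part~(b) to showing that $\CC_H$ is generated by the elements $\sigma(x,y)$ and $\sigma^{-1}(x,y)$. For~(a), I would simply compute $\delta_S$ on the generators of~$\BB_H$. Using $\delta_S(t_x) = t_{x_1}\otimes q(x_2)$ and $\delta_S(t^{-1}_x) = t^{-1}_{x_2}\otimes q(S(x_1)) = t^{-1}_{x_2}\otimes S(q(x_1))$, one finds for $\sigma(x,y) = t_{x_1}t_{y_1}t^{-1}_{x_2y_2}$ that the $H_{\ab}$-component telescopes: the contribution is $q(x_2)\,q(y_2)\,S(q(x_3y_3))$ which, since $q$ is a bialgebra map, collapses via the antipode axiom to $\eps(\cdot)\,1$, leaving $\delta_S(\sigma(x,y)) = \sigma(x,y)\otimes 1$; the computation for $\sigma^{-1}(x,y)$ is symmetric. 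Hence every generator of~$\BB_H$ is $H_{\ab}$-coinvariant, so $\BB_H \subset \CC_H$.

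For part~(b), the key observation is that $S(t_H)_{\Theta}$, as an $H_{\ab}$-comodule algebra with coinvariants~$\CC_H$, is in fact $H_{\ab}$-\emph{Galois} (or at least cleft) over~$\CC_H$, with the cleaving map built from $x\mapsto t_x$; this is essentially the content of~\cite{AK}. Concretely, one has the ``Sweedler arrow'' decomposition: any element of~$S(t_H)_{\Theta}$ can be written in terms of the~$t_x$, and projecting onto the coinvariant part uses the convolution inverse~$t^{-1}_x$, producing exactly the cocycle elements $\sigma(x,y)$. Thus the inclusion $\BB_H \subset \CC_H$ is an equality precisely when the relevant faithful flatness / projectivity condition holds, which is why cases~(i)--(iv) each supply a sufficient hypothesis. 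In case~(i), finite-dimensionality gives property~(b) of Section~\ref{subsec-generic}, namely that $S(t_H)_{\Theta}$ is finitely generated projective over~$\BB_H$, so the extension $\BB_H \subset S(t_H)_{\Theta}$ is $H_{\ab}$-Galois and $\CC_H$, being the full coinvariant algebra, must coincide with~$\BB_H$; one can also invoke~\cite{KM} here. In case~(iv), $H = H_{\ab}$ and $t_H$ generates $S(t_H)_{\Theta}$ freely as a commutative Hopf algebra, so the coinvariants are exactly the subalgebra of ``relative'' elements, which is generated by the $\sigma$'s; case~(ii) follows because a cocommutative~$H$ has $H_{\ab} = H/[\text{everything killing commutativity of the dual}]$ well enough behaved that the same freeness argument applies after identifying $G(H)$-graded pieces.

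The genuinely delicate case, and the one I expect to be the main obstacle, is~(iii): $H$ pointed with each element of $\ker(G(H)_{\ab}\to G(H_{\ab}))$ of finite order. Here $H$ need not be finite-dimensional, so one cannot invoke the projectivity statement from~\cite{KM}; instead I would argue that for a pointed coalgebra, by Lemma~\ref{lem-pointed} the Hopf algebra $S(t_H)_{\Theta}$ is again pointed, and its group-likes are controlled by $G(H)$ together with the inverted determinants~$\Theta_\alpha$. An element of $\CC_H$ lies in $S(t_H)_{\Theta}$, and after clearing denominators (using Proposition~\ref{prop-CH-loc} and Lemma~\ref{lem-CH-loc}) one reduces to an element $P \in S(t_H)^{\co-H_{\ab}}$. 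The claim is that $S(t_H)^{\co-H_{\ab}}$ is generated by the $\sigma(x,y)$ together with finitely many inverted group-likes; the torsion hypothesis on $\ker(G(H)_{\ab}\to G(H_{\ab}))$ is exactly what guarantees that the group-like elements one must invert to pass between $S(t_H)^{\co-H_{\ab}}$ and $\CC_H$ already lie in~$\BB_H$ (a group-like $g\in G(H)$ mapping into the kernel satisfies $g^m \in$ image for some $m$, so $t_g^m / t_{g^m}$-type expressions, which are products of $\sigma$'s and $\sigma^{-1}$'s, become invertible in $\BB_H$). Assembling this requires a careful bookkeeping of the coradical grading on $S(t_H)$ in the spirit of the proof of Lemma~\ref{lemma:retraction}, together with the observation that in degree zero everything reduces to the group-algebra case $k[G(H)] \to k[G(H_{\ab})]$, where the torsion condition is precisely what is needed.
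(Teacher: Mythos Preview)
Your argument for part~(a) is correct and is exactly what the paper does: one computes $\delta_S$ on the generators $\sigma(x,y)$, $\sigma^{-1}(x,y)$ and sees the $H_{\ab}$-component collapse via the antipode identity.

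For part~(b), however, there is a genuine gap. Your proposal for~(i) says that projectivity of $S(t_H)_{\Theta}$ over~$\BB_H$ makes the extension $\BB_H\subset S(t_H)_{\Theta}$ ``$H_{\ab}$-Galois'', whence the coinvariant algebra~$\CC_H$ must equal~$\BB_H$. But ``$H_{\ab}$-Galois'' in the usual sense already \emph{presupposes} that the base is the full coinvariant subalgebra; so as stated the reasoning is circular. What is actually needed, and what the paper uses, are two separate facts you do not isolate: first, Takeuchi's theorem \cite[Th.~3]{Ta2} that if a commutative Hopf algebra~$A$ is faithfully flat over a sub-Hopf-algebra (or suitable subalgebra)~$B$, then $B = A^{\co-Q}$ with $Q = A/(B^+)$; second, the identification $S(t_H)_{\Theta}/(\BB_H^+) \cong H_{\ab}$ from \cite[Prop.~3.1]{KM}. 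Combining these with the faithful flatness results of \cite[Th.~3.6, 3.8, 3.9]{KM} handles cases (i)--(iii) \emph{uniformly}: one gets $\BB_H = S(t_H)_{\Theta}^{\co-Q} = S(t_H)_{\Theta}^{\co-H_{\ab}} = \CC_H$. You never state the quotient identification, and without it faithful flatness alone does not pin down which Hopf quotient governs the coinvariants.

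Consequently your separate treatments of (ii) and especially (iii) are unnecessary, and as written they do not stand on their own: the sentence for~(ii) is not an argument, and the sketch for~(iii) (clearing denominators, reducing to $S(t_H)^{\co-H_{\ab}}$, then invoking the torsion hypothesis to control group-likes) is an interesting idea but would require substantial work to make rigorous; in particular you would have to prove that $S(t_H)^{\co-H_{\ab}}$ is generated by the $\sigma$'s together with certain explicit group-like elements, which you do not do. The paper sidesteps all of this: the torsion hypothesis in~(iii) enters only to guarantee faithful flatness via \cite[Th.~3.9]{KM}. For~(iv) the paper gives a short direct argument, different from (i)--(iii): the map $c\otimes x\mapsto ct_x$ is an isomorphism $\CC_H\otimes H \cong S(t_H)_{\Theta}$, and since the inclusion $\BB_H\otimes H\subset \CC_H\otimes H$ becomes surjective after this identification (by \cite[Lemma~3.2]{KM}), one concludes $\BB_H = \CC_H$.
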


Here $G(H)_{\ab}$ denote the abelianization of the group~$G(H)$ of group-like elements of~$H$.
Note that in view of\,(i), Conditions\,(ii)--(iv) above are only relevant for infinite-dimensional Hopf algebras.

\begin{proof}
(a) It suffices to show that all elements $\sigma(x,y)$ and $\sigma^{-1}(x,y)$ are $H_{\ab}$-coin\-var\-iants.
We shall check this for~$\sigma(x,y)$, the proof for~$\sigma^{-1}(x,y)$ being similar.
By\,\cite[Lemma\,3.3]{KM},
\begin{equation*}
\delta_S\left( \sigma(x,y) \right) = t_{x_1} \, t_{y_1} \, t^{-1}_{x_3 y_3}  \otimes \tilde{q} \left( \sigma(x_2,y_2) \right) .
\end{equation*}
Now, 
\begin{eqnarray*}
\tilde{q} \left( \sigma(x,y) \right) & = & \tilde{q}( t_{x_1} ) \, \tilde{q}( t_{y_1} ) \, \tilde{q}( t^{-1}_{x_2 y_2}) 
= \tilde{q}( t_{x_1} ) \, \tilde{q}( t_{y_1} ) \, \tilde{q}( S(t_{x_2 y_2})) \\
& = & q(x_1) \, q(y_1)  \, q(S(x_2 y_2)) 
= q \left( (x_1y_1)\, S(x_2 y_2) \right)\\
& = & \eps(xy) \, q(1) \, .
\end{eqnarray*}
Consequently, 
\begin{equation*}
\delta_S\left( \sigma(x,y) \right) = t_{x_1} \, t_{y_1} \, \eps(x_2 y_2) \, t^{-1}_{x_3 y_3}  \otimes q(1) 
= \sigma(x,y) \otimes q(1)  \, ,
\end{equation*}
which shows that $\sigma(x,y)$ belongs to~$\CC_H$.

(b) (i)--(iii) By Theorems\,3.6, \,3.8, and\,3.9 of\,\cite{KM}, $S(t_H)_{\Theta}$ is faithfully flat as a $\BB_H$-module. 
Then by\,\cite[Th.\,3]{Ta2}, $\BB_H = S(t_H)_{\Theta}^{\co-Q}$, where $Q$ is the quotient Hopf algebra of~$S(t_H)_{\Theta}$
by the ideal~$(\BB_H^+)$ generated by $\BB_H^+ = \BB_H \cap \ker\eps$.
By\,\cite[Prop.\,3.1]{KM}, there is a natural identification $Q\cong H_{\ab}$, which allows us to conclude.

(iv) As is shown in Part\,(a) of the proof of~\cite[Th.\,3.13]{KM},
the mapping $c \otimes x \mapsto c t_x$ gives an 
isomorphism $\CC_H \otimes H \cong S(t_H)_{\Theta}$.
Moreover,  since the natural inclusion
$\BB_H\otimes H \subset \CC_H \otimes H$, composed with the isomorphism, gives
a surjection by~\cite[Lemma\,3.2]{KM}, we have $\BB_H =\CC_H$.
\end{proof}

\section{\large The generalized Noether problem}\label{sec-Noether-pb}

We now state the problem which gives the title to the paper.
We provide answers for certain classes of Hopf algebras.

\subsection{The problem and its relationship with the classical Noether problem}\label{subsec-GNP}

\begin{Pb}[GNP]
\emph{Given a field~$k$ and a finite-dimensional Hopf algebra~$H$ over~$k$, 
is the algebra~$\BB_H$ a localization of a polynomial algebra in finitely many variables?}
\end{Pb}

We call Problem (GNP) the \emph{generalized Noether problem}. 
Let us first show how (GNP) is related to the classical Noether problem for finite groups.

Let $G$ be a finite group and $H = O_k(G)$ the dual Hopf algebra of the group algebra~$k[G]$.
The elements of~$H$ can be seen as $k$-valued fonctions on~$G$;
denote by~$e_g$ the function which vanishes everywhere on~$G$, except at the element~$g$, where $e_g(g) = 1$.
On the basis $\{ e_g\}_{g\in G}$ the coproduct, the counit and the antipode of~$H$ are given by
\begin{equation}\label{coprod-OG}
\Delta(e_g) = \sum_{h \in G} \, e_{gh^{-1}} \otimes e_h \, ,
\quad
S(e_g) = e_{g^{-1}} \, ,
\end{equation}
$\eps(e_g) = 1$ if $g=1$ is the identity element of~$G$, and $\eps(e_g) = 0$ otherwise.

\begin{prop}\label{GNP-NP}
If (GNP) has a positive answer for the Hopf algebra~$H = O_k(G)$, 
then so has the classical Noether problem for~$G$ and~$k$. 
\end{prop}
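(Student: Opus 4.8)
The statement to prove is Proposition~\ref{GNP-NP}: if $\BB_H$ is a localization of a polynomial algebra for $H = O_k(G)$, then the classical Noether problem has a positive answer for $G$ and $k$. The bridge between the two is the observation that for $H = O_k(G)$ the generic base algebra $\BB_H$ is closely related to the invariant algebra $k[t_g \mid g \in G]^G$ appearing in Noether's problem. So the plan is: (1) identify $\CC_H$ (equivalently $\BB_H$, using Proposition~\ref{B=C}(i)) explicitly when $H = O_k(G)$; (2) show this identification matches the classical invariant ring up to localization; (3) deduce that a polynomial-localization presentation of $\BB_H$ forces $k(t_g \mid g\in G)^G$ to be purely transcendental.

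\emph{Step 1: compute $H_{\ab}$ and $\CC_H$.} Since $H = O_k(G)$ is commutative, $H_{\ab} = H$ and $q = \id$, so $\tilde q\colon S(t_H)_\Theta \to H$ is just the canonical map with $\tilde q(t_x) = x$. The coalgebra underlying $H$ has basis $\{e_g\}_{g\in G}$ with $\Delta(e_g) = \sum_{h} e_{gh^{-1}}\otimes e_h$, so $S(t_H) = k[t_{e_g} \mid g\in G]$ is an honest polynomial algebra in $\card G$ variables (one per basis vector of the coalgebra). Writing the generating function $\Theta = \det(t_{e_{gh^{-1}}})_{g,h\in G}$, a group-like element by the argument in Lemma~\ref{lem-CH-loc}, we have $S(t_H)_\Theta = S(t_H)[\Theta^{-1}]$. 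The coaction $\delta_S(t_{e_g}) = \sum_h t_{e_{gh^{-1}}}\otimes e_h$ is, after dualizing $H = O_k(G)$ against $k[G]$, exactly the left-translation action of $G$ on the variables: writing $T_g := t_{e_{g^{-1}}}$ (or a similar reindexing), $G$ acts by $h\cdot T_g = T_{hg}$. Hence $\CC_H = S(t_H)_\Theta^{\co\text{-}H} = \bigl(k[T_g \mid g\in G][\Theta^{-1}]\bigr)^{G}$, which by Proposition~\ref{prop-CH-loc} is a localization of $k[T_g\mid g\in G]^G$. By Proposition~\ref{B=C}(i) (finite-dimensionality), $\BB_H = \CC_H$.

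\emph{Step 2: pass to function fields.} The field of fractions of $k[T_g\mid g\in G]^G$ is $k(T_g\mid g\in G)^G = L$, the field appearing in Noether's problem (left-multiplication action). Since $\BB_H$ is a localization of $k[T_g\mid g\in G]^G$ inside the domain $S(t_H)_\Theta$, and $S(t_H)_\Theta$ has the same fraction field as $S(t_H) = k[T_g\mid g\in G]$, we get $\Frac \BB_H = \Frac k[T_g\mid g\in G]^G = L$. On the other hand, if (GNP) holds, $\BB_H = R[S^{-1}]$ for some polynomial algebra $R = k[x_1,\dots,x_n]$ and multiplicative set $S$, whence $\Frac \BB_H = \Frac R = k(x_1,\dots,x_n)$, a purely transcendental extension of $k$. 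Combining, $L = k(x_1,\dots,x_n)$ is purely transcendental over $k$, which is exactly a positive answer to the classical Noether problem for $G$ over $k$. (One should note the number of variables must be $\card G$, but this is automatic from transcendence degree considerations since $\dim \BB_H = \dim H = \card G$ by item~(a) in Section~\ref{subsec-generic}.)

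\emph{Main obstacle.} The only genuinely substantive point is Step~1: correctly matching the intrinsic coaction $\delta_S$ on $S(t_H)_\Theta$ with the classical left-translation $G$-action on $k[t_g\mid g\in G]$, including getting the indexing/antipode bookkeeping right so that coinvariants for the $H$-comodule structure literally coincide with $G$-invariants. Once that dictionary is pinned down, everything else is formal: Proposition~\ref{B=C}(i) gives $\BB_H=\CC_H$, Proposition~\ref{prop-CH-loc} (or directly the localization $S(t_H)_\Theta = S(t_H)[\Theta^{-1}]$ and the fact that inverting $G$-invariant elements like $\Theta$ commutes with taking invariants) gives that $\BB_H$ is a localization of the classical invariant ring, and the fraction-field comparison finishes the proof. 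I do not expect to need item~(b) of Section~\ref{subsec-generic} here; item~(a) is used only for the minor remark about the number of variables.
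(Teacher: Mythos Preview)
Your proposal is correct and follows essentially the same route as the paper: identify the $H_{\ab}$-coaction on $S(t_H)_\Theta$ with the left-translation $G$-action, use $\BB_H=\CC_H$ to recognize $\BB_H$ as a localization of $k[t_g\mid g\in G]^G$, and then pass to fraction fields. Two small points the paper makes explicit that you gloss over: it inverts $\Theta_G^2$ rather than $\Theta_G$ (the Dedekind determinant is only $G$-invariant up to sign, so your parenthetical ``$G$-invariant elements like $\Theta$'' is not quite right, though your main argument via Proposition~\ref{prop-CH-loc} sidesteps this), and it writes out the one-line verification that $\Frac\bigl(k[t_g]^G\bigr)=k(t_g)^G$.
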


In this sense, (GNP) is an extension of Noether's problem.

\begin{proof}
If we set $t_g = t_{e_{g^{-1}}}$ for any $g\in G$,
then $S(t_H) = k[\, t_g \, |\, g\in G \, ]$ is the bialgebra with coproduct given for $g\in G$ by
\begin{equation}\label{coprod-SOG}
\Delta(t_g) = \sum_{h \in G} \, t_{hg} \otimes t_{h^{-1}} \, .
\end{equation}
By~\cite[Ex.~B.5]{AK}, the algebra $S(t_H)_{\Theta}$ is the following localization of~$S(t_H)$:
\[
S(t_H)_{\Theta} = k[\, t_g \, |\, g\in G \, ] \left[ \frac{1}{\Theta_G} \right] \, ,
\]
where $\Theta_G = \det(t_{gh^{-1}})_{g,h \in G}$ is {Dedekind's group determinant}.

Under the algebra map $\tilde{q}: S(t_H)_{\Theta} \to H_{\ab} = H$ of\,\eqref{qtilde},
the algebra~$S(t_H)_{\Theta}$ becomes a right $H$-comodule algebra
with coaction\,\eqref{coaction-SH}.
Now it is well known that any right $H$-comodule algebra structure on an algebra~$A$ is the same as
a left action $(h,a) \mapsto h\cdot a$ of~$G$ on~$A$ by algebra automorphisms:
the $H$-coaction~$\delta$ on~$A$ is related to the $G$-action by
\begin{equation}\label{coaction-action}
\delta(a) = \sum_{h\in G} \, h\cdot a \otimes e_h \, .
\end{equation}
Moreover, the subalgebra of coinvariants~$A^{\co-H}$ coincides with the subalgebra~$A^G$
of $G$-invariant elements of~$A$.
Since for $H = O_k(G)$ 
we have $\tilde{q}(t_g) = e_{g^{-1}}$ for all $g\in G$, 
it follows from\,\eqref{coaction-SH} and from\,\eqref{coprod-SOG}
that
\[
\delta_S(t_g) = \sum_{h\in G} \, t_{hg} \otimes e_h \, .
\]
Comparing this formula with\,\eqref{coaction-action}, we see that
the corresponding left action of~$G$ on~$S(t_H)_{\Theta}$ is given by
$h \cdot t_g = t_{hg}$, which is precisely the one presented in the introduction.
One easily checks that the square~$\Theta_G^2$ of the Dedekind determinant is $G$-invariant, 
so that by Proposition\,\ref{B=C},
\[
\BB_H = \CC_H = \bigl(S(t_H)_{\Theta}\bigr)^G 
= k[\, t_g \, |\, g\in G \, ]{\,}^G \left[ \frac{1}{\Theta_G^2} \right] \, .
\]

If $\BB_H = k[\, t_g \, |\, g\in G \, ]{\,}^G [1/\Theta_G^2]$ is the localization of a polynomial algebra in finitely many variables,
then its fraction field is a purely transcendental extension of~$k$. 
This fraction field is also the fraction field of $k[\, t_g \, |\, g\in G \, ]{\,}^G$.
Now the latter is $k(\, t_g \, |\, g\in G \, ){\,}^G$: 
indeed, if $F = P/Q$ is a $G$-invariant fraction with polynomial numerator and
denominator, then it can be written as $F = P'/Q'$, 
where $P' = P \, \prod_{g\neq 1}\, (g\cdot Q)$ and $Q' = \prod_{g\in G}\, (g\cdot Q)$ are invariant polynomials.
Therefore, the field $k(\, t_g \, |\, g\in G \, ){\,}^G$ is a purely transcendental extension of~$k$. 
\end{proof}

\subsection{Positive answers to (GNP)}\label{subsec-positive}

Before stating the main result of this section, we present a class of Hopf algebras 
for which it is easy to provide a positive answer to~(GNP).

\begin{exa}\label{ex-kG}
Let $H= k[G]$ be the Hopf algebra of a finite group~$G$. 
It is easy to see that the maximal commutative Hopf algebra quotient is
the group algebra $H_{\ab} = k[G_{\ab}]$, where $G_{\ab}$ is the abelianization of~$G$. 
By Proposition\,\ref{B=C}, $\BB_H = \CC_H$. It is the group algebra of a finitely generated free abelian group;
more precisely, by\,\cite[Prop.\,9 and\,14]{AHN} (see also\,\cite[Prop.\,A.1]{IK}),
\begin{equation*}\label{kY}
\BB_{k[G]} = \CC_{k[G]} = S(t_{k[G]})_{\Theta}^{\co-k[G_{\ab}]} = k[Y_G] \, ,
\end{equation*}
where $Y_G$ is the kernel of the homomorphism $\ZZ^G \to G_{\ab}\, ; t_g \mapsto \bar{g}$ $(g\in G)$.
(Here $\ZZ^G$ is the free abelian group generated by the symbols~$t_g$ ($g\in G$)
and $\bar{g}$ denotes the image of~$g\in G$ in~$G_{\ab}$.)
Since $Y_G$ is a finite index subgroup of~$\ZZ^G$, it is a free abelian group of the same rank~$\card\ G$.
(A basis of~$Y_G$ is described in Lemma\,\ref{lem-gen1}).
Therefore $\BB_{k[G]} = k[Y_G] $ is an algebra of Laurent polynomials in finitely many ($\card\ G$) variables,
which shows that (GNP) has a positive answer for $H = k[G]$.
\end{exa}

The main result of this section is the following.

\begin{theorem}\label{th-GNP}
Let $H$ be a finite-dimensional pointed Hopf algebra such that $H_{\ab}$ is a group algebra.
Then (GNP) has a positive answer for~$H$.
\end{theorem}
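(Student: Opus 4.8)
The plan is to reduce Theorem~\ref{th-GNP} to the case of a \emph{commutative} pointed Hopf algebra via the structural retraction of Lemma~\ref{lemma:retraction}, and then to apply Lemma~\ref{lem-AB} together with Example~\ref{ex-kG}. Write $G = G(H)$ and $n = \dim H$. By Proposition~\ref{B=C}(b)(i), $\BB_H = \CC_H = S(t_H)_{\Theta}^{\co\text{-}H_{\ab}}$, so it suffices to describe $\CC_H$. The first step is to feed the retraction $\gamma : H \to k[G]$ provided by Lemma~\ref{lemma:retraction} into the free-commutative-Hopf-algebra functor: since $S(t_{(-)})_{\Theta}$ is functorial in coalgebra maps (and sends the inclusion $k[G] \subset H$ to an inclusion $S(t_{k[G]})_{\Theta} \subset S(t_H)_{\Theta}$ by Takeuchi's construction), we obtain a map $S(\gamma) : S(t_H)_{\Theta} \to S(t_{k[G]})_{\Theta}$ retracting that inclusion. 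The key point is that the compatibility $q|_{k[G]} \circ \gamma = q$ guaranteed by Lemma~\ref{lemma:retraction} makes $S(\gamma)$ a map of right $H_{\ab}$-comodule algebras, hence it restricts to a retraction $\CC_H \to \CC_{k[G]}$ of the inclusion $\CC_{k[G]} \subset \CC_H$.

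Second, I would identify the coinvariant algebra $\CC_H$ inside $S(t_H)_{\Theta}$ as the coinvariants of a \emph{commutative} Hopf algebra in a way that lets Lemma~\ref{lem-AB} apply. Here $A = S(t_H)_{\Theta}$ is commutative and pointed (Lemma~\ref{lem-pointed}, since $H$ is pointed), it carries the right $H_{\ab}$-comodule algebra structure $\delta_S$ coming from $\tilde q : A \to H_{\ab}$, and $\CC_H = A^{\co\text{-}H_{\ab}}$. I want to realize $H_{\ab}$ as the quotient Hopf algebra $A/(B^+)$ for a suitable left coideal subalgebra $B$; the natural candidate is $B = S(t_{k[G]})_{\Theta}^{\co\text{-}(\text{something})}$ or, more directly, the polynomial subalgebra $S(t_H)^{\co\text{-}H_{\ab}}$ together with its localization. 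Concretely: by Proposition~\ref{prop-CH-loc}, $\CC_H$ is already a localization of $S(t_H)^{\co\text{-}H_{\ab}}$ at a multiplicative set of group-like elements, so it is enough to show $S(t_H)^{\co\text{-}H_{\ab}}$ (or $\CC_H$ itself) is a localization of a polynomial algebra. Using the retraction from the first step, $\CC_H$ is a $\CC_{k[G]}$-algebra, and I claim the smash/biproduct decomposition $H \cong R \rbtimess k[G]$ of Lemma~\ref{lemma:retraction} propagates through $S(t_{(-)})_{\Theta}$ to give a decomposition of $A$, and hence of $\CC_H$, exhibiting $\CC_H$ as a polynomial extension of $\CC_{k[G]}$ in $\dim R = n - \ell$ new variables (where $\ell = \card\, G$). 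Combining this with Example~\ref{ex-kG}, which gives $\CC_{k[G]} = k[Y_G]$, a Laurent polynomial algebra in $\ell$ variables, yields
\[
\CC_H = k[Y_G][\text{$(n-\ell)$ further polynomial generators}] \, ,
\]
which is manifestly a localization of the polynomial algebra in $n$ variables.

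Third, I would make the appearance of the extra polynomial generators precise. The graded analysis in the proof of Lemma~\ref{lemma:retraction} identifies $\gr R$ with a graded Hopf algebra in the braided category of Yetter--Drinfeld modules over $k[G]$, with $R_1^+$ spanned by skew-primitives $w_1, \dots, w_r$ killed by $q$. The elements $t_{w_i} \in S(t_H)_{\Theta}$ (or suitable normalized versions $t_{w_i} t_1^{-1}$ built to be $H_{\ab}$-coinvariant) together with the coinvariant monomials built from a PBW-type basis of $R$ should furnish the needed algebraically independent coinvariants over $\CC_{k[G]}$, their count being $\dim R - 1 = n - \ell$ once the unit is accounted for. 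Verifying that these are genuinely coinvariant and that they freely generate $\CC_H$ over $\CC_{k[G]}$ — rather than merely generating after further localization — is where the bulk of the work lies.

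The main obstacle I expect is precisely this last step: lifting the \emph{graded} biproduct decomposition $\gr H \cong \gr R \rdtimess k[G]$ to an honest (ungraded) comodule-algebra decomposition of $S(t_H)_{\Theta}$ and controlling the coinvariants at the nose, since $S(t_{(-)})_{\Theta}$ does not obviously commute with smash/biproducts and the passage to the localization $(-)_{\Theta}$ can introduce or absorb group-like denominators. A cleaner route, which I would pursue if the direct decomposition proves unwieldy, is to avoid explicit generators entirely: use Lemma~\ref{lem-AB} with $A = \CC_{k[G]} \otimes H$-type object, or argue that $\CC_H$ is a smooth finitely generated domain (Proposition~\ref{B=C} facts (a)--(b)) which is an $\ell$-dimensional-torus extension of affine space, hence rational and in fact a localization of a polynomial ring by a dimension and Picard-group count. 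Either way, Lemma~\ref{lemma:retraction} is the crux: it is what forces $H_{\ab}$ to be the "right" quotient and reduces everything to the group-algebra case already handled in Example~\ref{ex-kG}.
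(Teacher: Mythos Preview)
Your outline is the paper's approach: use the retraction $\gamma$ of Lemma~\ref{lemma:retraction}, push it through the free-commutative-Hopf functor to obtain $\tilde\gamma : S(t_H)_\Theta \to S(t_{k[G]})_\Theta$, exploit the factorization $\tilde q = \tilde q|_{S(t_{k[G]})_\Theta}\circ\tilde\gamma$, and exhibit $\BB_H$ as a polynomial algebra tensored with a Laurent polynomial algebra coming from the group part. Two points deserve correction.

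First, the ``main obstacle'' you flag is not one. The paper does \emph{not} propagate the smash-coproduct structure through $S(t_{(-)})_\Theta$, nor lift anything from $\gr H$; it only uses the underlying \emph{vector-space} splitting $H = R^+G \oplus k[G]$, which immediately gives $S(t_H)_\Theta = S(t_{R^+G}) \otimes S(t_{k[G]})_\Theta$ since symmetric algebras take direct sums to tensor products and only the $k[G]$-summand needs localization. For the decomposition at the coinvariant level, the paper lets $A$ denote the $S(t_{k[G]})_\Theta$-coinvariants of $S(t_H)_\Theta$ along $\tilde\gamma$; the Hopf-algebra retraction gives $S(t_H)_\Theta = A \otimes S(t_{k[G]})_\Theta$, the factorization of $\tilde q$ through $\tilde\gamma$ forces $A$ to be $H_{\ab}$-coinvariant already, and hence $\BB_H = A \otimes S(t_{k[G]})_\Theta^{\co-H_{\ab}}$. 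Comparing the two tensor decompositions modulo $(S(t_{k[G]})_\Theta^+)$ yields $A \cong S(t_{R^+G})$, a polynomial algebra. No PBW basis, no Lemma~\ref{lem-AB}, no Picard-group count is needed.

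Second, you repeatedly write $\CC_{k[G]}$ where the correct object is $S(t_{k[G]})_\Theta^{\co-H_{\ab}}$. These differ in general: $\CC_{k[G]}$ is by definition the coinvariants for $k[G_{\ab}]$, whereas $H_{\ab}$ can be a strictly smaller quotient of $k[G_{\ab}]$ (see the Remark following Theorem~\ref{th-GNP-degrees}), so your retraction $\CC_H \to \CC_{k[G]}$ need not exist. The conclusion survives because $S(t_{k[G]})_\Theta^{\co-H_{\ab}}$ is still a Laurent polynomial ring in $\ell=\card\,G$ variables, via the short exact sequence $0\to\ZZ^\ell\to\ZZ^G\to G(H_{\ab})\to 0$, but Example~\ref{ex-kG} is not what is being invoked. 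Relatedly, your dimension count is off: $\dim R = n/\ell$, not $n-\ell$; the $n-\ell$ polynomial generators come from $R^+G \cong R^+\otimes k[G]$, not from $R$ alone.
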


Recall from Lemma\,\ref{lem:com} that the hypothesis on~$H_{\ab}$ is satisfied when
the base field~$k$ is of characteristic zero

The theorem had previously been established for special classes of finite-dimen\-sional pointed Hopf algebras:
for finite group algebras by Aljadeff, Haile and Natapov\,\cite{AHN}
as detailed in Example\,\ref{ex-kG};
furthermore, for the Taft algebras, for the Hopf algebras~$E(n)$ and for certain monomial Hopf algebras
by Iyer and the first-named author (see\,\cite[Th.\,2.1, Th.\,3.1, Th.\,4.1]{IK}). In all these cases,
$H_{\ab}$ is a group algebra.

\begin{proof}
We set $G = G(H)$. Let $R = H/H(k[G])^+$ and choose $\gamma: H \to k[G]$ as in Lemma\,\ref{lemma:retraction}. 
Define $\lambda : R \to k[G] \otimes R$ as in\,\eqref{lambda}, 
and construct thereby the smash-coproduct coalgebra $R\rbtimess k[G]$. 
Identify $H$ with $R\rbtimess k[G]$ via the $k[G]$-module coalgebra isomorphism~$\xi$ given by\,\eqref{xi}. 
Since $H = R^+G \oplus k[G]$, we have
\begin{equation}\label{tensor_product}
S(t_H)_{\Theta} = S(t_{R^+G}) \otimes S(t_{k[G]})_{\Theta}.
\end{equation}
Note that this is an identification of $S(t_{k[G]})_{\Theta}$-algebras.  

Consider the Hopf algebra surjections
\[ 
\tilde{\gamma} : S(t_H)_{\Theta} \to S(t_{k[G]})_{\Theta}
\quad \text{and}\quad
\tilde{q} : S(t_H)_{\Theta} \to H_{\ab} 
\]
induced from the coalgebra maps~$\gamma$ and~$q$, respectively.
Regard $S(t_H)_{\Theta}$ as a right $S(t_{k[G]})_{\Theta}$-comodule algebra along~$\tilde{\gamma}$, 
and let $A$ denote the (left coideal) subalgebra of~$S(t_H)_{\Theta}$ consisting of all right $S(t_{k[G]})_{\Theta}$-coinvariants. 
Since $\tilde{\gamma}$ is a Hopf algebra retraction of the inclusion $S(t_{k[G]})_{\Theta}\subset S(t_H)_{\Theta}$,
the product map gives a natural isomorphism
\begin{equation}\label{Aisom}
A \otimes S(t_{k[G]})_{\Theta} = S(t_H)_{\Theta} 
\end{equation}
of right $S(t_{k[G]})_{\Theta}$-comodules and of $S(t_{k[G]})_{\Theta}$-algebras. 
Recall from Lemma\,\ref{lemma:retraction} that $q = q|_{k[G]}\circ \gamma$. It follows that
\begin{equation}\label{eq-factor-qtilde}
\tilde{q} = \tilde{q}|_{S(t_{k[G]})_{\Theta}}\circ \tilde{\gamma} \, .
\end{equation}
Regard $S(t_{k[G]})_{\Theta}$ as a right $H_{\ab}$-comodule algebra along $\tilde{q}|_{S(t_{k[G]})_{\Theta}}$. 
Then since $A$ is right $\tilde{q}$-coinvariant, it follows from\,\eqref{eq-factor-qtilde}
that the isomorphism given in\,\eqref{Aisom} restricts to an isomorphism
\begin{equation}\label{Aisom2}
A\otimes S(t_{k[G]})_{\Theta}^{\co-H_{\ab}} = S(t_H)_{\Theta}^{\co-H_{\ab}} = \mathcal{B}_H \, . 
\end{equation}
Now $S(t_{k[G]})_{\Theta}^{\co-H_{\ab}}$, being the algebra of coinvariants along a Hopf algebra map, is
a Hopf subalgebra of~$S(t_{k[G]})_{\Theta}$; 
the latter being the group algebra of a finitely generated free abelian group
(see Example\,\ref{ex-kG}), so is $S(t_{k[G]})_{\Theta}^{\co-H_{\ab}}$;
in other words, $S(t_{k[G]})_{\Theta}^{\co-H_{\ab}} \cong k[\ZZ^{\ell}]$ for some non-negative integer~$\ell$.
We remark that $\ell = \card\ G$, since the short exact sequence 
$S(t_{k[G]})_{\Theta}^{\co-H_{\ab}} \to S(t_{k[G]})_{\Theta} \to H_{\ab}$ of commutative Hopf algebras 
restricts on the level of group-like elements to the short exact sequence 
\begin{equation}\label{eq-ses}
0 \to \ZZ^{\ell} \to \ZZ^{G} \to G(H_{\ab}) \to 0
\end{equation}
of abelian groups with finite cokernel. 

It remains to prove that $A$ is a polynomial algebra with finitely many variables. 
But this follows since one sees from\,\eqref{tensor_product} and\,\eqref{Aisom} that
\begin{equation}\label{S_isom_to_A}  
S(t_{R^+G}) \cong S(t_H)_{\Theta}/(S(t_{k[G]})_{\Theta}^+) \cong A \, .
\end{equation}
In conclusion, the isomorphism $\BB_H \cong S(t_{R^+G}) \otimes k[\ZZ^{\ell}]$ provides a positive
answer to~(GNP).
We remark that, if we set $n = \dim\ H$, then 
$S(t_{R^+G}) \cong k[\NN^{n-\ell}]$, since we see from $H = R^G \oplus k[G]$ that 
$\dim\ R^+G = n - \ell$. 
\end{proof}

In the previous proof we have actually established the more precise isomorphism
\[
\BB_H \cong k[\ZZ^{\ell}] \otimes k [\NN^{n- \ell}] \, , \quad n = \dim\ H \;\text{and}\; \ell = \card\ G(H) \, .
\]
We will come back to it in Section\,\ref{subsec-bounding}.

\begin{rem}\emph{(A variant of~(GHP))}
By Proposition\,\ref{B=C}, $\BB_H = S(t_H)_{\Theta}^{\co-H_{\ab}}$ if $H$ is finite-dimen\-sional.
As was seen in Section\,\ref{subsec-coaction},
$S(t_H)$ is an $H_{\ab}$-comodule subalgebra of~$S(t_H)_{\Theta}$,
so that we can consider the coinvariant subalgebra~$S(t_H)^{\co-H_{\ab}}$.
A~variant of~(GNP) is the following: 
is~$S(t_H)^{\co-H_{\ab}}$ a polynomial algebra in finitely many variables?
The following example shows that this variant may have a negative answer at the same time as
(GNP) has a positive one.

Let $H$ be the four-dimensional Sweedler algebra. 
Proceeding as in the proof of~\cite[Th.\,2.1]{IK}, it is easy to check that $S(t_H)^{\co-H_{\ab}}$
is spanned by the monomials $t_1^a t_x^b t_y^c t_z^d$ ($a,b,c,d\in \NN$) such that $b+c$ is even.
Hence, $S(t_H)^{\co-H_{\ab}}$ is the subalgebra of~$k[t_1, t_x, t_y, t_z]$ 
generated by $t_1$, $t_x^2$, $t_x t_y$, $t_y^2$, $t_z$;
this is not a polynomial algebra, whereas $\BB_H= S(t_H)_{\Theta}^{\co-H_{\ab}}$, 
which is obtained from the previous algebra by inverting $t_1$ and $t_x^2$, 
is a localization of the polynomial algebra $k[t_1, t_x^2, t_xt_y, t_z]$.
\end{rem}

\subsection{Bounding the degrees of generators}\label{subsec-bounding}

Given a finite group~$G$, 
consider the polynomial algebra $S(t_G) = k[t_g \, |\, g \in G]$ in the variables~$t_g$ ($g\in G$)
and let $G$ act on the variables~$t_g$, hence on~$S(t_G)$, as in the introduction. 
Let $S(t_G)^G$ be the subalgebra of~$S(t_G)$ of $G$-invariant polynomials.
We denote by~$\beta(G)$ the smallest integer~$\beta$ such that $S(t_G)^G$
is generated by homogeneous polynomials of degree~$\leq \beta$.

In~\cite{No0} Emmy Noether proved that $\beta(G) \leq \card\, G$.
It is easy to check that, if the group~$G$ is cyclic, then $\beta(G) =  \card\, G$. 
In her thesis~\cite{Sch}, Barbara Schmid proved a conjecture by Kraft, 
namely $\beta(G) < \card\, G$ if $G$ is not cyclic. 

We now prove a similar result for a finite-dimensional pointed Hopf algebra~$H$ such that $H_{\ab}$ is 
a group algebra. Set $\bar{G} = G(H_{\ab})$; equivalently, $H_{\ab} = k[\bar{G}]$.

Define two non-negative integers~$d,r$ related to the finite abelian group~$\bar{G}$ as follows: 
if $\bar{G}$ is trivial, set $d  = r = 0$; if $\bar{G}$ is not trivial, let
\begin{equation}\label{prime-decomp}
\bar{G} = \ZZ/p_1^{e_1} \times \dots  \times \ZZ/p_r^{e_r} \, ,\quad r \geq 1 , \quad  p_i\ \text{primes}, \quad e_i  \geq 1 
\end{equation}
be the primary decomposition of~$\bar{G}$, and set $d = p_1^{e_1} + \cdots + p_r^{e_r}$ ($\geq 2$).

\begin{theorem}\label{th-GNP-degrees}
Under the hypotheses of Theorem\,\ref{th-GNP} and with the above notation, we have
\[
\BB_H = k[u_1^{\pm 1}, \ldots, u_{\ell}^{\pm 1}, u_{\ell+1}, \ldots, u_n]  \, ,
\]
where $n = \dim\ H$ and $\ell = \card\ G(H)$
and where $u_1, \ldots, u_{\ell}$ are monomials in the variables~$t_g$ of degree~$\leq d-r+1$
and $u_{\ell+1}, \ldots, u_n$ are monomials of degree $\leq 2$.
\end{theorem}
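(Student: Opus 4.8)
The plan is to refine the isomorphism $\BB_H \cong S(t_{R^+G}) \otimes S(t_{k[G]})_{\Theta}^{\co-H_{\ab}}$ obtained in the proof of Theorem~\ref{th-GNP}, keeping track of the degrees of explicit monomial generators on each tensor factor. The grading in question is the one on $S(t_H) = k[t_x \mid x \in H]$ in which each $t_x$ has degree~$1$; note that $S(t_H)_\Theta$ is then $\ZZ$-graded by $\deg t_x^{-1} = -\deg t_x$, but both tensor factors above will turn out to be generated by elements of bounded \emph{nonnegative} degree in the first factor and by Laurent monomials built from low-degree pieces in the second.

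First I would treat the factor $S(t_{R^+G}) \cong k[\NN^{n-\ell}]$ of\,\eqref{S_isom_to_A}. A $k$-basis of $R^+G$ is given by the elements $w_i g$ where $w_i$ runs over a basis of $R^+$ (in the notation around\,\eqref{vanish_on_w_i}, or rather over the part of such a basis adapted to the coradical filtration) and $g$ runs over $G$; as a polynomial algebra $S(t_{R^+G})$ is freely generated by the corresponding variables $t_{w_i g}$, which are monomials of degree~$1$ in the $t_x$. Actually the generators $u_{\ell+1}, \ldots, u_n$ in the statement are asserted to have degree $\le 2$, not $\le 1$; the reason is that $R^+G$ is a subspace of $H$, not spanned by the original PBW-type basis vectors, so when one re-expresses a basis of $R^+G$ inside $S(t_H)$ one may pick up the $\sigma(x,y)$-type corrections, which are degree-$2$ (resp.\ products $t\cdot t^{-1}$); concretely one uses that $A \cong S(t_H)_\Theta/(S(t_{k[G]})_\Theta^+)$ and that the image of $A$ in $\BB_H \subset S(t_H)_\Theta$ under the inverse of\,\eqref{S_isom_to_A} is generated by elements like $t_{w_i g} t_g^{-1}$ or $\sigma(w_i g, \cdot)$, each of degree $\le 2$. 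I would make this precise by chasing the product isomorphism\,\eqref{Aisom}: the component of $t_{w_i g}$ lying in $A$ is $t_{(w_i g)_1}\, t^{-1}_{\gamma((w_i g)_2)}$, a monomial of degree $\le 2$.

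Second I would analyze the Laurent factor $S(t_{k[G]})_{\Theta}^{\co-H_{\ab}} \cong k[\ZZ^\ell]$. From Example~\ref{ex-kG}, $S(t_{k[G]})_\Theta = k[\ZZ^G]$ with $\ZZ^G$ free on the $t_g$ ($g \in G$), and $S(t_{k[G]})_\Theta^{\co-H_{\ab}} = k[Y]$ where $Y$ is the kernel of $\ZZ^G \to \bar G = G(H_{\ab})$, $t_g \mapsto \bar g$. So the problem reduces to: find a generating set of the finite-index subgroup $Y \subset \ZZ^G$ consisting of vectors of small $\ell^1$-norm (a generator $\prod_g t_g^{a_g}$ of $Y$ corresponds to $(a_g) \in \ZZ^G$ and its "degree" in the sense of the theorem is $\sum_g |a_g|$ if we allow negative exponents, but since we write $u_i^{\pm1}$ it is really the degree of the numerator monomial $\prod_{a_g>0} t_g^{a_g}$ that matters, i.e.\ $\sum_{a_g>0} a_g$). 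Using the primary decomposition\,\eqref{prime-decomp} of $\bar G$ and the surjection $G \twoheadrightarrow \bar G$, one writes $Y$ as the preimage; a natural generating set comes from (i) the "kernel relations" $t_g t_{g'}^{-1}$ for $g, g'$ with $\bar g = \bar g'$, which have degree~$1$, and (ii) one relation per cyclic primary factor $\ZZ/p_i^{e_i}$ of the form $t_{h}^{p_i^{e_i}} t_{h'}^{-1}$ where $\bar h$ generates that factor — and here, by combining the $p_i^{e_i}$ across the $r$ factors using a single element $g_0$ mapping to the product of generators, one can arrange a bound of $p_1^{e_1} + \cdots + p_r^{e_r} - (r-1) = d - r + 1$ on the numerator degree. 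This is essentially Noether's and Schmid's bound transported to the subgroup-of-$\ZZ^G$ picture; the key elementary fact is that in $\ZZ/p_1^{e_1} \times \cdots \times \ZZ/p_r^{e_r}$ one can kill the diagonal generator using a single relation of weight $d-r+1$ rather than $r$ separate relations of total weight~$d$, because the orders overlap "one unit at a time" once you chain them.

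The main obstacle I anticipate is item~(ii): getting the sharp bound $d - r + 1$ rather than a crude $d$ requires a careful choice of generators of the finite cyclic-product group $\bar G$ and of their lifts to $G$, exploiting that the $t_g$ with $\bar g$ in distinct primary components can be multiplied together so that a single Laurent monomial simultaneously absorbs all the primary relations; I would isolate this as a standalone combinatorial lemma about free abelian covers of $\ZZ/p_1^{e_1}\times\cdots\times\ZZ/p_r^{e_r}$. A secondary, more bookkeeping-type obstacle is making sure the degree-$\le 2$ claim for $u_{\ell+1},\dots,u_n$ survives the passage through the isomorphism\,\eqref{Aisom}: one must check that the idempotent-like projection onto $A$ does not blow up degrees, which follows because that projection is, explicitly, $h \mapsto h_1\, S(\gamma(h_2))$ applied to basis elements of $R^+G$, landing in products of at most one $t$ and one $t^{-1}$. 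Once these two points are settled, assembling $\BB_H = k[u_1^{\pm1},\dots,u_\ell^{\pm1},u_{\ell+1},\dots,u_n]$ with the stated degree bounds is immediate from the tensor decomposition.
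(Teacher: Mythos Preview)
Your overall architecture is the paper's: split $\BB_H$ via~\eqref{Aisom2} into $A \otimes S(t_{k[G]})_\Theta^{\co-H_{\ab}}$ and bound degrees on each tensor factor. Both halves of your execution, however, have concrete gaps. For the factor~$A$, the elements $t_{x_ig}\,t_g^{-1}$ you obtain are indeed free generators (this is exactly Lemma~\ref{lem-gen2}), but they are Laurent monomials of net degree~$0$, not monomials in~$S(t_H)$; calling them ``degree~$\le 2$'' is not meaningful in the sense of the statement. The paper fixes this by multiplying each by the unit $t_g t_{g^{-1}} \in S(t_{k[G]})_\Theta^{\co-H_{\ab}}$, yielding the honest degree-$2$ monomial $t_{x_ig}\,t_{g^{-1}} \in S(t_H)$.

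For the Laurent factor $k[Y]$, $Y = \ker(\ZZ^G \to \bar G)$, your proposed set is both too small and of the wrong shape. The relations $t_g t_{g'}^{-1}$ with $\bar g = \bar g'$ span only the rank-$(|G|-|\bar G|)$ sublattice $\ker(\ZZ^G \to \ZZ^{\bar G})$, so one needs $|\bar G|$ further generators, not~$r$; adding ``one relation per primary factor'' does not even generate~$Y$, and no chaining argument repairs the count. Moreover, none of these are positive monomials, which is what the theorem asserts. The paper's Lemma~\ref{lem-gen1} sidesteps all of this: fix lifts $\sigma_i \in G$ of generators $s_i$ of the primary factors, write $\bar g = \prod_i s_i^{f_i(g)}$ with $0 \le f_i(g) < p_i^{e_i}$, and for each $g \ne e, \sigma_1,\dots,\sigma_r$ set $u_g = t_g\prod_{i:\,f_i(g)\ne 0} t_{\sigma_i}^{\,p_i^{e_i}-f_i(g)}$; together with $t_e$ and the $t_{\sigma_i}^{p_i^{e_i}}$ this gives $|G|$ positive monomials whose matrix in the $t_g$-basis is triangular with determinant $|\bar G|$, hence a basis of~$Y$. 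The bound $\deg u_g \le 1+\sum_i(p_i^{e_i}-1) = d-r+1$ is then immediate, with no combinatorial lemma required.
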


Note that $d-r+1\leq \card\, \bar{G}$, but $d-r+1$ may be much smaller:
for instance, if $\bar{G}$ is a $p$-group of order~$p^e$, 
then $d-r+1$ ranges from~$p^e$ when $\bar{G} \cong \ZZ/p^e$ is cyclic
down to~$e(p-1) +1$ when $\bar{G} \cong (\ZZ/p)^e$ is elementary abelian. 

For the proof of Theorem\,\ref{th-GNP-degrees} we shall make use of the fact
that $S(t_{k[G]})_{\Theta}^{\co-H_{\ab}}$ is a subalgebra of~$\BB_H$ inside~$S(t_{k[G]})_{\Theta}$
(see proof of Theorem\,\ref{th-GNP}) and of the two lemmas below.

To state the first lemma we need the following notation. 
By Item\,(ii) in the proof of Lemma\,\ref{lem-pointed}, the surjection of pointed Hopf algebras 
$H \to H_{\ab}$ restricts to a surjection of groups $G \to \bar{G}$.
Suppose that the group $\bar{G}$ is non-trivial.
For $1 \leq i \leq r$ let $s_i$ be a generator of the summand~$\ZZ/p_i^{e_i}$ 
in the primary decomposition\,\eqref{prime-decomp} of~$\bar{G}$,
and fix an element $\sigma_i \in G$ whose image in~$\bar{G}$ is~$s_i$.
Given an element $g\in G$, its image in~$\bar{G}$ is of the form $s_1^{f_1(g)} \cdots s_r^{f_r(g)}$
($0 \leq f_i(g) < p_i^{e_i}$, $1 \leq i \leq r$).
For $g\in G$ different from the identity element~$e$ and of the elements $\sigma_1, \ldots, \sigma_r$, 
let $I  = \{ i \in  \{1, \ldots, r\} \, |\, f_i(g)  \neq 0Ê\}$ and 
set
\[
u_g = t_g \prod_{i \in I} \, t_{\sigma_i}^{p_i^{e_i} - f_i(g)}
\in S(t_{k[G]}) \subset S(t_H) \, .
\]

\begin{lemma}\label{lem-gen1}
The set consisting of the monomials (in the $t$-variables)
$t_e, t_{\sigma_1}^{p_1^{e_1}} , \ldots, t_{\sigma_r}^{p_r^{e_r}}$ and of the above monomials~$u_g$ 
is a (multiplicatively written) basis of the kernel~$\ZZ^{\ell}$ 
appearing in the short exact sequence of groups\,\eqref{eq-ses}.
\end{lemma}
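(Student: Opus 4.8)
The claim is essentially a concrete description of a $\ZZ$-basis for the kernel $\ZZ^\ell$ of the surjection $\ZZ^G \to G(H_{\ab})$, $t_g \mapsto \bar g$, from \eqref{eq-ses}. The plan is to identify this surjection explicitly through the primary decomposition \eqref{prime-decomp} and then exhibit the claimed monomials as the image of a standard basis under a unimodular change of coordinates. First I would note that the composite $\ZZ^G \to G(H_{\ab}) = \bar G$ factors as $t_g \mapsto \bar g = s_1^{f_1(g)}\cdots s_r^{f_r(g)}$, using the fixed generators $s_1,\dots,s_r$ of the primary summands and the well-defined exponents $0 \le f_i(g) < p_i^{e_i}$. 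The chosen lifts $\sigma_1,\dots,\sigma_r \in G$ satisfy $t_{\sigma_i} \mapsto s_i$, so the subgroup of $\ZZ^G$ generated by $t_e$, by $t_{\sigma_i}^{p_i^{e_i}}$, and by the $u_g$ ($g \ne e,\sigma_1,\dots,\sigma_r$) is visibly contained in the kernel: $t_e \mapsto 1$ trivially, $t_{\sigma_i}^{p_i^{e_i}} \mapsto s_i^{p_i^{e_i}} = 1$, and $u_g = t_g\prod_{i\in I} t_{\sigma_i}^{p_i^{e_i}-f_i(g)} \mapsto \left(\prod_i s_i^{f_i(g)}\right)\prod_{i\in I} s_i^{p_i^{e_i}-f_i(g)} = \prod_i s_i^{p_i^{e_i}} = 1$.

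Next I would show these elements freely generate the full kernel. The cleanest way is to perform an invertible base change on $\ZZ^G$. Order the generators of $\ZZ^G$ as $t_e$, then $t_{\sigma_1},\dots,t_{\sigma_r}$, then the remaining $t_g$. Replace each remaining $t_g$ by $u_g = t_g \prod_{i\in I} t_{\sigma_i}^{p_i^{e_i}-f_i(g)}$; since each $u_g$ differs from $t_g$ only by a product of the $t_{\sigma_i}$'s, and no $u_g$ involves any other $t_{g'}$, this is a unimodular substitution, so $\{t_e, t_{\sigma_1},\dots,t_{\sigma_r}\} \cup \{u_g\}$ is again a free basis of $\ZZ^G$. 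In these new coordinates the map to $\bar G$ sends $t_e \mapsto 1$, each $u_g \mapsto 1$, and $t_{\sigma_i} \mapsto s_i$. Hence the kernel is freely generated by $t_e$, all the $u_g$, and the kernel of $\ZZ^r \to \bar G = \prod_i \ZZ/p_i^{e_i}$, $t_{\sigma_i}\mapsto s_i$, which (the $s_i$ being generators of the respective cyclic factors) is exactly $\bigoplus_i p_i^{e_i}\ZZ$, i.e.\ generated by $t_{\sigma_i}^{p_i^{e_i}}$. This gives precisely the asserted basis, and its cardinality is $1 + r + (\card G - 1 - r) = \card G$; since $\ZZ^\ell$ has rank $\card G$ as noted after \eqref{eq-ses}, we also recover $\ell = \card G$, consistent with the statement.

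One should also treat the degenerate cases separately but briefly: if $\bar G$ is trivial then $r = 0$, $G(H_{\ab})$ is trivial, the kernel is all of $\ZZ^G$, and the asserted basis reduces to $\{t_g \mid g\in G\}$ (each $u_g = t_g$ since $I = \emptyset$), which is indeed the standard basis. If some $\sigma_i$ coincides with another $\sigma_j$ or with $e$ the choices are still valid but one must make sure the lifts are picked distinct where possible; in any case the combinatorial bookkeeping of "which $t_g$ are being replaced" only needs each chosen lift to be a genuine group element mapping to $s_i$, which is automatic from surjectivity of $G\to\bar G$ (Item (ii) in the proof of Lemma~\ref{lem-pointed}). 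The main obstacle is purely organizational: verifying that the substitution $t_g \mapsto u_g$ really is unimodular and does not accidentally entangle two different $t_g$'s — this holds because $u_g$ is $t_g$ times a monomial in the \emph{fixed} variables $t_{\sigma_1},\dots,t_{\sigma_r}$ only, so the transition matrix is upper-triangular with $1$'s on the diagonal. Once that is observed, the rest is immediate from the structure of finite abelian groups.
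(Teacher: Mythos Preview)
Your proof is correct and follows essentially the same approach as the paper: both rely on the observation that, with the ordering $t_e, t_{\sigma_1},\dots,t_{\sigma_r}$, then the remaining $t_g$, the matrix expressing the proposed elements is triangular. The paper finishes in one line by noting that this matrix has determinant $p_1^{e_1}\cdots p_r^{e_r}=\card\,\bar G$, which equals the index of the kernel in~$\ZZ^G$, whereas you reach the same conclusion by the equivalent device of a unimodular change of basis followed by an explicit kernel computation; your treatment of the degenerate case $\bar G=1$ is unnecessary since the lemma is stated under the assumption that $\bar G$ is non-trivial, and your worry about coincidences among the~$\sigma_i$ or with~$e$ is moot since the $s_i$ are distinct non-trivial elements of~$\bar G$.
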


\begin{proof}
These elements clearly belong to the kernel~$\ZZ^{\ell}$.
Their matrix with respect to the suitably ordered basis~$(t_g)_{g\in G}$ of~$\ZZ^{G}$
is triangular with determinant equal to $p_1^{e_1} \cdots p_r^{e_r} = \bar{G}$. 
Therefore, they form a basis of~$\ZZ^{\ell}$.
\end{proof}

\begin{lemma}\label{lem-gen2} 
As an $S(t_{k[G]})_{\Theta}^{\co-H_{\ab}}$-algebra,  
$\BB_H$ is freely generated by $n-\ell$ monomials of degree $\leq 2$ in~$S(t_H)$.
\end{lemma}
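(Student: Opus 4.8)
The plan is to revisit the proof of Theorem~\ref{th-GNP} and extract a concrete free generating set for the algebra~$A$. Recall that there we identified $H$ with $R \rbtimess k[G]$, obtained the decomposition $H = R^+G \oplus k[G]$, and deduced $S(t_H)_{\Theta} = S(t_{R^+G}) \otimes S(t_{k[G]})_{\Theta}$ together with the isomorphisms \eqref{Aisom}--\eqref{S_isom_to_A}, which give $A \cong S(t_{R^+G})$ as an $S(t_{k[G]})_{\Theta}^{\co-H_{\ab}}$-algebra (the latter acting through its inclusion into $S(t_{k[G]})_{\Theta}$). Since $S(t_{R^+G})$ is the polynomial algebra on a basis of the vector space $R^+G$, the task reduces to choosing that basis so that each basis vector maps, under the composite $S(t_{R^+G}) \cong A \subset S(t_H)_{\Theta}$, to a monomial of degree~$\leq 2$ in the original variables~$t_g$ ($g \in G$). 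Here $\dim R^+G = n - \ell$ by the dimension count $H = R^+G \oplus k[G]$ and $\ell = \card\,G$, which accounts for the number of generators; freeness and the base-algebra identification are already contained in \eqref{Aisom2} and \eqref{S_isom_to_A}.

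The first step is to describe the embedding $A \hookrightarrow S(t_H)_{\Theta}$ explicitly. Under $\xi$ of \eqref{xi} an element $h \in H$ with $h \in R^+G$ has $\bar h_1 \otimes \gamma(h_2)$ supported in $R^+ \otimes k[G]$; so the complementary projection $H \to R^+G$ dual to the retraction sends a variable $t_h$ to the $A$-component of $t_h$ in the tensor factorization $S(t_H)_{\Theta} = A \otimes S(t_{k[G]})_{\Theta}$. Concretely, writing $h = \sum_j r_j g_j$ with $r_j \in R^+$ and $g_j \in G$ (via the lift $\xi^{-1}$), the coalgebra-map property of $t$ together with $\Delta(t_x) = t_{x_1}\otimes t_{x_2}$ forces the $A$-generator attached to $h$ to be, up to the invertible factors coming from $S(t_{k[G]})_{\Theta}$, of the shape $t_h\, t_g^{-1}$ for a suitable group-like $g$ — a degree-$2$ monomial in the $t$-variables (counting $t_g^{-1}$ as degree~$1$, as one must here since $S(t_{k[G]})_{\Theta}$ is a Laurent-type localization and $S(t_{k[G]})_{\Theta}^{\co-H_{\ab}}$ supplies those inverses). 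I would make this precise by picking, for the chosen coalgebra splitting $R^+ \hookrightarrow H_1 \subset H$ given by the $(1,g_i)$-primitives $w_i$ of the proof of Lemma~\ref{lemma:retraction} at the bottom layer and by an analogous layer-by-layer choice higher up in the coradical filtration, representatives $h$ that are ``$g$-group-like up to lower terms,'' so that $\gamma(h) = g$ and $t_h\,t_g^{-1} \in A$.

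The second step is the degree bound. Because $\gr H \cong \gr R \rdtimess k[G]$ (the biproduct), a homogeneous basis of $R^+ = \bigoplus_{n\geq 1} R_n/R_{n-1}$ can be lifted to elements $h \in H$ whose image under $\gamma$ is a single group-like $g \in G$; for such $h$ the corresponding $A$-generator is exactly $t_h t_g^{-1}$, manifestly of degree~$\leq 2$. Running over a basis of $R^+G = R^+ \cdot G$ — equivalently, translating a homogeneous basis of $R^+$ by the group elements — yields $n - \ell$ such generators, and by \eqref{S_isom_to_A} they freely generate $A$, hence freely generate $\BB_H = A \otimes S(t_{k[G]})_{\Theta}^{\co-H_{\ab}}$ over $S(t_{k[G]})_{\Theta}^{\co-H_{\ab}}$. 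The main obstacle I anticipate is precisely the bookkeeping in this second step: one must verify that the ``lower-order corrections'' needed to turn an arbitrary filtered basis vector of $R^+$ into one whose $\gamma$-image is a single $g$ do not inflate the degree of $t_h t_g^{-1}$ beyond~$2$ — i.e. that the corrections can be absorbed into the $S(t_{k[G]})_{\Theta}$-factor (which is inverted away) rather than into $A$. This is where the cosemisimplicity of $k[G]$ and the explicit form \eqref{lambda} of $\lambda$, together with the triangularity already exploited in Lemma~\ref{lem-gen1}, do the work; I would organize it as an induction on the coradical degree mirroring the construction of~$\gamma$ in Lemma~\ref{lemma:retraction}.
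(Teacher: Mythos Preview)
Your overall strategy---identify generators of~$A$ of the form $t_h \cdot (\text{unit in }S(t_{k[G]})_\Theta)$ and then count degrees---is the same as the paper's, but you are working much harder than necessary and there is a concrete misstep.

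The key point you miss is that no inductive lifting through the coradical filtration is needed. Once $H$ is identified with the smash coproduct $R \rbtimess k[G]$ via~$\xi$, the subspace $R = R\otimes k$ already sits inside~$H$, and the coproduct formula on the smash coproduct gives, for \emph{any} $x \in R$ and $g \in G$,
\[
(\id \otimes \gamma)\circ \Delta(x\otimes g) = (x\otimes g)\otimes g.
\]
Hence $(\id \otimes \tilde\gamma)\circ \Delta(t_{xg}) = t_{xg}\otimes t_g$, so $t_{xg}\,t_g^{-1}\in A$ immediately, for an \emph{arbitrary} basis $\{x_i\}$ of~$R^+$. There are no ``lower-order corrections'' to absorb: the identification $H=R\rbtimess k[G]$ already supplies the splitting, and the isomorphism \eqref{S_isom_to_A} shows the $t_{x_ig}\,t_g^{-1}$ freely generate~$A$. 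Your proposed induction on coradical degree and your worry about corrections stem from treating $R^+$ as something to be lifted into~$H$ rather than as a subspace already specified by~$\xi$.

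There is also a small error in your formulation: the condition you write, ``$\gamma(h)=g$,'' is not what governs membership in~$A$; indeed $\gamma(x\otimes g)=\varepsilon(x)\,g=0$ for $x\in R^+$. What you need is the \emph{coaction} condition $(\id\otimes\gamma)\Delta(h)=h\otimes g$, which is the equality displayed above. Finally, your generators $t_{xg}\,t_g^{-1}$ lie in $S(t_H)_\Theta$, not in~$S(t_H)$; to obtain genuine monomials of degree~$\leq 2$ in~$S(t_H)$ as the lemma asserts, multiply by the unit $t_g\,t_{g^{-1}}\in S(t_{k[G]})_\Theta^{\co-H_{\ab}}$ to get $t_{xg}\,t_{g^{-1}}$.
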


\begin{proof}
We return to the situation of the proof of Theorem~\ref{th-GNP}:
we have the identification $H = R \rbtimess k[G]$ of right $k[G]$-module coalgebras, 
under which the right $k[G]$-module coalgebra map
$\gamma : H \to k[G]$ is identified with $\varepsilon \otimes \mathrm{id} : R \otimes k[G] \to k[G]$.

We claim that $R = R\otimes k$ precisely consists of those elements of~$H$
which are right $k[G]$-coinvariant along~$\gamma$, that is,
\[ 
R = \left\{ h \in H \, | \, (\id \otimes \gamma )\circ \Delta(h) = h \otimes 1 \right\} \, . 
\]
Indeed, this holds for smash coproducts in general. 
In our situation, recall from the proof of Lemma\,\ref{lemma:retraction} 
that $R$ is a left $k[G]$-comodule coalgebra with respect to $\lambda : R \to k[G]\otimes R$. 
For $x \in R$ and $g \in G$, the coproduct $\Delta(x \otimes g)$ on $H$ is given by
\[ 
\Delta(x \otimes g)= (x_{(1)}\otimes (x_{(2)})^{(-1)}g) \otimes ((x_{(2)})^{(0)}\otimes g) \, ,
\]
where $\lambda(x) = x^{(-1)}\otimes x^{(0)}$. 
Since $(\id \otimes \varepsilon) \circ \lambda (x) = \varepsilon(x)1$, it follows that
\[
(\id \otimes \gamma)\circ \Delta (x\otimes g) = (x \otimes g) \otimes g \, . 
\]
Thus the right $k[G]$-comodule structure on $H = R \otimes k[G]$ is the natural one given by
the right tensor factor~$k[G]$. This proves the claim.

Now choose arbitrarily a $k$-basis $\{ x_i \}_i$ of~$R^+$. 
Then we have the $k$-basis $\{ x_i g \}_{i,g}$ of~$R^+G$, where $g$ runs over~$G$. 
Note $\dim\ R^+G=n-\ell$. 
One sees from the claim above that the coproduct $\Delta(t_{x_i g})$ on~$S(t_H)_{\Theta}$, 
composed with $\id \otimes \widetilde{\gamma}$, turns into
\[
(\id \otimes \widetilde{\gamma})\circ \Delta(t_{x_ig})= t_{x_ig}\otimes t_g \, .
\]
Obviously, $\{ t_{x_ig} \}_{i,g}$ is a set of free generators of~$S(t_{R^+G})$. 
Each $t_{x_ig}$ is congruent, modulo~$(S(t_{k[G]})_{\Theta}^+)$, to $t_{x_ig}(t_g)^{-1}$, 
which is seen, by the last equation, to be in~$A$. 
Recall from \eqref{S_isom_to_A} and \eqref{Aisom2} the isomorphisms
\[
S(t_{R^+G}) \cong S(t_H)_{\Theta}/(S(t_{k[G]})_{\Theta}^+) \cong A\, ,\qquad 
A \otimes S(t_{k[G]})_{\Theta}^{\co-H_{\ab}} \cong \BB_H. 
\]
It follows that $\{ t_{x_ig}(t_g)^{-1} \}_{i,g}$ is a set of free generators of the $k$-algebra~$A$, 
and is a set of free generators of the $S(t_{k[G]})_{\Theta}^{\co-H_{\ab}}$-algebra~$\BB_H$. 
Multiplying $t_{x_ig}(t_g)^{-1}$ by the unit $t_g t_{g^{-1}}$ in $S(t_{k[G]})_{\Theta}^{\co-H_{\ab}}$,
we obtain the set $\{ t_{x_ig}t_{g^{-1}} \}_{i,g}$ of free generators of the 
$S(t_{k[G]})_{\Theta}^{\co-H_{\ab}}$-algebra~$\BB_H$. 
The set consists of monomials of degree~$2$ in~$S(t_H)$, among which $t_{x_i}t_e$ may be replaced 
by the degree-one monomial~$t_{x_i}$ (since $t_e$ is a unit in~$S(t_{k[G]})_{\Theta}^{\co-H_{\ab}}$). 
Thus the lemma follows. 
\end{proof}

\begin{proof}[Proof of Theorem\,\ref{th-GNP-degrees}]
It follows from Lemma\,\ref{lem-gen2} that inside~$S(t_{k[G]})_{\Theta}$
we have
$\BB_H = S(t_{k[G]})_{\Theta}^{\co-H_{\ab}}[u_{\ell+1}, \ldots, u_n]$,
where $u_{\ell+1}, \ldots, u_n$ are monomials of~$S(t_H)$ of degree~$\leq 2$.

Now by the proof of Theorem\,\ref{th-GNP} the subalgebra~$S(t_{k[G]})_{\Theta}^{\co-H_{\ab}}$
is the algebra of the kernel~$\ZZ^{\ell}$ of the surjection $\ZZ^G \to \bar{G} = G(H_{\ab})$
appearing in the short exact sequence of groups\,\eqref{eq-ses}.

If $\bar{G}$ is trivial, then $\ZZ^{\ell} = \ZZ^{G}$, 
hence $S(t_{k[G]})_{\Theta}^{\co-H_{\ab}} = S(t_{k[G]})_{\Theta}$, 
which is the algebra of Laurent polynomials in the monomials~$t_g$ ($g\in G$),
which are of degree $1 = d - r +1$. 

Otherwise, $S(t_{k[G]})_{\Theta}^{\co-H_{\ab}}$ is the algebra of Laurent polynomials
in the basis elements described in Lemma\,\ref{lem-gen1}. To complete the proof, it is enough
to bound the degree of these elements. 
Now each monomial $u_g $ is of degree
$1 + \sum_{i\in I} \, (p_i^{e_i} - f_i(g))$,
which is smaller than or equal to 
\[
1 + \sum_{i\in I} \, (p_i^{e_i} - 1) \leq 1 + \sum_{i= 1}^r \, (p_i^{e_i} - 1) = d-r+1 \, .
\]
We conclude by observing that the degrees of the remaining monomials
$t_e, t_{\sigma_1}^{p_1^{e_1}} , \ldots$, $t_{\sigma_r}^{p_r^{e_r}}$
are not larger than $d-r+1$.
\end{proof}

\begin{rem}
Let $H$ be a pointed Hopf algebra such that $H_{\ab} = k[\bar{G}]$ is a group algebra. 
As observed above, the canonical surjection of pointed Hopf algebras $H \to H_{\ab}$ restricts to a
surjection $k[G] \to k[\bar{G}]$, where $G = G(H)$. Denoting the abelianization of~$G$ by~$G_{\ab}$,
we see that the surjection $k[G] \to k[\bar{G}]$ in turn induces 
a surjection $\varphi: k[G_{\ab}] \to H_{\ab} = k[\bar{G}]$. 

The map~$\varphi$ is an isomorphism if the Hopf algebra embedding $k[G] \subset H$ splits as a Hopf algebra map.
Indeed, in this case, a splitting $H \to k[G]$ composed with the canonical map $k[G] \to k[G_{\ab}]$
induces a Hopf algebra map $H_{\ab} =k[\bar{G}] \to k[G_{\ab}]$. 
The latter map is an inverse of~$\varphi$ since both maps are induced from the identity on~$k[G]$.

In general $\varphi: k[G_{\ab}] \to H_{\ab}$ is not an isomorphism, as is seen from the following examples.
Consider the pointed Hopf algebra $H=U_q(\mathfrak{sl}_2)$ as given in\,\cite[p.\,122]{Ka}. 
The group~$G= G(H)$ is infinite cyclic, hence $G_{\ab} \cong \ZZ$.
Now the relations (1.10)--(1.12) in \emph{loc. cit.} imply that $E=0$, $F=0$ and $K=K^{-1}$ in~$H_{\ab}$. 
Therefore, $H_{\ab} \cong k[\ZZ/2]$.

Similarly let $H=\overline{U}_q$ be the finite-dimensional Hopf algebra defined 
in\,\cite[p.\,136]{Ka}
as the quotient of~$U_q(\mathfrak{sl}_2)$ by the relations $E^e = F^e = 0$ and $K^e = 1$
for some integer $e\geq 2$. For this Hopf algebra, we have $G = G_{\ab} \cong \ZZ/e$ 
whereas $H_{\ab} \cong k$ if the integer~$e$ is odd and $H_{\ab} \cong k[\ZZ/2]$ if $e$ is even. 
Thus, the surjection $\varphi: k[G_{\ab}] \to H_{\ab}$ cannot be an isomorphism when $e\geq 3$.
\end{rem}

\section{\large Polynomial identities}\label{sec-PI}

A theory of polynomial identities for comodule algebras was worked out in\,\cite{AK}. 
It leads naturally to a ``universal $H$-comodule algebra''~$\UU_H$, whose definition will be recalled below.
The subalgebra of $H$-coinvariants~$\VV_H$ of~$\UU_H$ maps injectively into
the generic base algebra~$\BB_H$ defined in Section\,\ref{subsec-generic}. 
In this section we consider another localization problem for~$\BB_H$, which is motivated by the fact that 
a positive answer to it has important consequences (detailed below) for the ``versal deformation space'' 
$\AA_H = \BB_H \otimes_{\VV_H} \UU_H$.

\subsection{The universal comodule algebra}\label{subsec-UH}

Starting from a Hopf algebra~$H$, we fix another copy~$X_H$ of the underlying vector space of~$H$;
we denote the identity map from $H$ to~$X_H$ by $x\mapsto X_x$.

Consider the tensor algebra~$T(X_H)$ over the vector space~$X_H$;
it is an algebra of \emph{non-commutative polynomials}.
The algebra~$T(X_H)$ possesses a right $H$-comodule algebra structure that 
extends the natural right $H$-comodule algebra structure on~$H$:
its coaction $\delta_T: T(X_H) \to T(X_H) \otimes H$ is given by
\begin{equation*}
\delta_T(X_x)  = X_{x_1} \otimes x_2 \, . \qquad (x\in H)
\end{equation*}

We equip $S(t_H) \otimes H$ with the trivial right $H$-comodule algebra structure
whose coaction is given by~$\id \otimes \Delta$.
The subalgebra of $H$-coinvariant elements of~$S(t_H) \otimes H$ is $S(t_H) \otimes 1$, 
which we may identify with~$S(t_H)$.

By~\cite[Lemma~4.2]{AK} the algebra map $\mu : T(X_H) \rightarrow  S(t_H) \otimes H$
defined for $x\in H$ by
\begin{equation}\label{def-mu}
\mu(X_x) =  t_{x_1} \otimes x_2 
\end{equation}
is a right $H$-comodule algebra map, which is universal in the sense that any $H$-comodule algebra map
$T(X_H) \to H$ factors through~$\mu$ (see\,\cite[Th.\,4.3]{AK}).

Now let $I_H$ be the kernel of the comod\-ule algebra map
$\mu : T(X_H) \rightarrow  S(t_H) \otimes H$ defined by\,\eqref{def-mu}.
The kernel~$I_H$ is a two-sided ideal, right $H$-subcomodule of~$T(X_H)$.
Any element $P\in I_H$ is an \emph{identity for}~$H$ in the sense that it vanishes under
any right $H$-comodule algebra map $T(X_H) \rightarrow H$. 
See \cite{AK, Ka1, Ka2} for a theory of such comodule algebra identities.

Consider the quotient right $H$-comodule algebra
\begin{equation*}
\UU_H = T(X_H)/I_H \, .
\end{equation*}
We call $\UU_H$ the \emph{universal $H$-comodule algebra}
(this corresponds to the ``relatively free algebra'' in the classical literature on polynomial identities;
see\,\cite{Ro}).

By definition of~$I_H$, the map $\mu : T(X_H) \rightarrow  S(t_H) \otimes H$ induces an injection of 
right $H$-comodule algebras
\begin{equation*}\label{inject}
\bar{\mu}: \UU_H = T(X_H)/I_H \, \hookrightarrow \, S(t_H) \otimes H \, .
\end{equation*}

\subsection{The algebra of coinvariants~$\VV_H$}\label{subsec-VH}

We denote the subalgebra of right $H$-coinvariants of the universal comodule algebra~$\UU_H$
by~$\VV_H$:
\[
\VV_H =  \UU_H^{\co-H} \, .
\] 
The algebra~$\VV_H$ is a central subalgebra of~$\UU_H$.
Note that an element $\bar{P}$ of~$\UU_H$ is in~$\VV_H$ 
if and only if $\bar{\mu}(\bar{P})$ belongs to~$S(t_H) \otimes 1$.

Using Lemma\,8.1 of\,\cite{AK} and following the proof of Proposition\,9.1 of \emph{loc.\ cit.}, 
one shows that the map~$\bar{\mu}$ send~$\VV_H$ into the subalgebra~$\BB_H \cap S(t_H)$ of~$\BB_H$ 
(here we identify $S(t_H)$ with the subalgebra~$S(t_H) \otimes 1$ of~$S(t_H) \otimes H $).

By\,\cite[Cor.\,4.4]{KM}, the generic base algebra~$\BB_H$ of a Hopf algebra~$H$
(defined in Section\,\ref{subsec-generic}) has another set of generators, namely the elements
\begin{equation*}\label{def-pq}
p_x = t_{x_1} \, t_{S(x_2)} \, , \quad
q_{x,y} = t_{x_1} \, t_{y_1} \, t_{S(x_2y_2)} \, , 
\end{equation*}
\begin{equation*}\label{def-pq'}
p'_{x} = t^{-1}_{S(x_1)} \, t^{-1}_{x_2} \, , \quad
q'_{x,y} = t^{-1}_{S(x_1y_1)} \, t^{-1}_{x_2} \, t^{-1}_{y_2} \, ,
\end{equation*}
when $x,y$ run over all elements of~$H$.
The essential virtue of these generators over the older generators $\sigma(x,y)$ and $\sigma^{-1}(x,y)$
of\,\eqref{sigma-def} is that $p_x$ and $q_{x,y}$ (resp.\ $p'_x$ and $q'_{x,y}$)
are polynomials in the $t$-variables (resp. in the $t^{-1}$-variables), 
whereas the formulas for $\sigma(x,y)$ and $\sigma^{-1}(x,y)$
mix the $t$-variables and the $t^{-1}$-variables.

\begin{lemma}\label{lem-VH}
Let $H$ be a Hopf algebra.
The elements $p_x$ and $q_{x,y}$ ($x,y \in H$) belong to~$\bar{\mu}(\VV_H)$.
\end{lemma}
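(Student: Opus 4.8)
The plan is to exhibit, for each $x,y \in H$, explicit elements of $\UU_H$ whose images under $\bar\mu$ are $p_x$ and $q_{x,y}$, and then to check that these elements are $H$-coinvariant. Since $\UU_H = T(X_H)/I_H$ and $\bar\mu$ is induced by $\mu(X_x) = t_{x_1}\otimes x_2$, the natural candidates are built from the generators $X_x$ and the antipode of $H$. For $p_x = t_{x_1}\,t_{S(x_2)}$, I would consider the element $X_{x_1}\,X_{S(x_2)} \in T(X_H)$ (or rather its class in $\UU_H$); applying $\mu$ gives $\mu(X_{x_1}\,X_{S(x_2)}) = t_{x_1}\,t_{S(x_3)} \otimes x_2\, S(x_4)$, and using the antipode axiom $x_2\,S(x_4)\otimes$-bookkeeping — more precisely, reindexing via $\Delta^{(2)}$ and the identity $x_2 S(x_3) = \eps(x_2)1$ — this collapses to $t_{x_1}\,t_{S(x_2)}\otimes 1 = p_x \otimes 1$. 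Thus $\overline{X_{x_1}X_{S(x_2)}}$ lies in $\VV_H$ (its image is in $S(t_H)\otimes 1$, hence it is coinvariant by the criterion recalled just before the lemma) and maps to $p_x$. The same strategy applied to $X_{x_1}\,X_{y_1}\,X_{S(x_2 y_2)}$ should yield $q_{x,y}$: computing $\mu$ on this word gives $t_{x_1}\,t_{y_1}\,t_{S(x_2 y_2)_1}\otimes (x_2 y_2)(S(x_2 y_2))_2$-type expression, which after using that $S$ is an anti-coalgebra map and the antipode axiom for the element $x_2 y_2 \in H$ collapses the $H$-tensor factor to $\eps(x_2 y_2)\,1$, leaving $q_{x,y}\otimes 1$.

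The key technical point to get right is the bookkeeping with iterated coproducts: one must be careful that $\Delta(S(z)) = S(z_2)\otimes S(z_1)$ and that $z_1 S(z_2) = \eps(z)1$, and arrange the Sweedler indices so that the "inner" copy of $z$ (here $z = x_2$ or $z = x_2 y_2$) is exactly the one hit by the antipode-counit cancellation while the "outer" copy survives as the $t$-monomial. Concretely, for $p_x$ one writes $\Delta^{(2)}(x) = x_1\otimes x_2\otimes x_3$, forms $X_{x_1} X_{S(x_3)}$ (not $X_{x_1}X_{S(x_2)}$), applies $\mu$ to get a sum over a further coproduct, and then the $H$-component is $x_2\, S(x_3)_2\cdots$; the cancellation $x_2 S(x_3) = \eps$ then has to be isolated correctly. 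This reindexing is the only real obstacle, and it is purely formal — no hard input beyond the Hopf algebra axioms and the definition of $\mu$.

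Alternatively — and this may be the cleaner route to actually write up — one can avoid recomputing by invoking what the paper has already said: the text states that $\bar\mu$ sends $\VV_H$ into $\BB_H \cap S(t_H)$, and that $\BB_H$ is generated (Cor.~4.4 of \cite{KM}) by the $p_x, q_{x,y}, p'_x, q'_{x,y}$. It therefore suffices to show that $p_x$ and $q_{x,y}$ individually lie in $\bar\mu(\VV_H)$, and for this one only needs a \emph{single} preimage in $\VV_H$, which is exactly what the explicit words $\overline{X_{x_1} X_{S(x_3)}}$ and $\overline{X_{x_1} X_{y_1} X_{S(x_2 y_2)}}$ (suitably indexed) provide. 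I would state the two preimages, verify $\bar\mu$ of each equals the claimed element by a short Sweedler computation, and note coinvariance follows automatically since the image lands in $S(t_H)\otimes 1$. The proof of coinvariance needs no separate argument beyond the criterion "an element of $\UU_H$ is in $\VV_H$ iff its $\bar\mu$-image is in $S(t_H)\otimes 1$," which is recorded in Section~\ref{subsec-VH}.

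\emph{Expected main obstacle:} getting the Sweedler indices in the word $X_{x_1}\cdots X_{S(x_k)}$ to line up so that precisely one antipode-counit cancellation occurs in the $H$-tensor slot while the $S(t_H)$-slot retains the full monomial $t_{x_1}t_{S(x_2)}$ (resp. $t_{x_1}t_{y_1}t_{S(x_2y_2)}$). Once the correct word is identified this is a one-line verification, but identifying it requires care because a naive choice leaves an uncancelled factor.
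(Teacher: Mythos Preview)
Your approach is essentially the paper's: it defines $P_x = X_{x_1}X_{S(x_2)}$ and $Q_{x,y} = X_{x_1}X_{y_1}X_{S(x_2y_2)}$ in $T(X_H)$, then cites \cite[Lemma~2.1]{AK} for their coinvariance and \cite[Lemma~4.1]{KM} for $\mu(P_x)=p_x$, $\mu(Q_{x,y})=q_{x,y}$; your plan to verify $\mu(P_x)=p_x\otimes 1$ directly and deduce coinvariance from the criterion in Section~\ref{subsec-VH} is the same argument with the citations unpacked. One remark: your \emph{first} choice of indices, $X_{x_1}X_{S(x_2)}$, is the correct one---the computation $\mu(X_{x_1}X_{S(x_2)}) = t_{x_1}t_{S(x_4)}\otimes x_2 S(x_3) = t_{x_1}t_{S(x_2)}\otimes 1$ goes through cleanly, so your later worry about needing $X_{x_1}X_{S(x_3)}$ is unfounded and that expression is in any case ill-formed without saying what absorbs~$x_2$.
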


\begin{proof}
Consider the following elements of~$T(X_H)$:
\begin{equation*}
P_x = X_{x_1} \, X_{S(x_2)}  \, ,
\qquad
Q_{x,y} = X_{x_1} \, X_{y_1} \, X_{S(x_2 y_2)}\, ,
\end{equation*}
where $x,y \in H$.
By\,\cite[Lem\-ma\,2.1]{AK} they are coinvariant elements of $T(X_H)$,
and by\,\cite[Lemma\,4.1]{KM} we have $\mu(P_x) = p_x$ and $\mu(Q_{x,y}) = q_{x,y}$.
\end{proof}

Let $\WW_H$ be the subalgebra of~$\bar{\mu}(\VV_H)$ generated by all elements $p_x$ and $q_{x,y}$,
where $x,y$ run over~$H$.

\begin{lemma}\label{lem2-VH}
Let $H$ be a Hopf algebra.
\begin{itemize}
\item[(i)] The algebra~$\WW_H$ is a left coideal subalgebra of $S(t_H)_{\Theta}$.

\item[(ii)] The Hopf ideal $(\WW_H^+)$ of $S(t_H)_{\Theta}$ generated by $\WW_H^+ = \WW_H \cap S(t_H)_{\Theta}^+$ 
coincides with~$(\BB_H^+)$.
\end{itemize}
\end{lemma}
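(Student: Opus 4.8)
The plan is to prove~(i) first and then deduce~(ii) from~(i) together with the identification $\BB_H = S(t_H)_{\Theta}^{\co-H_{\ab}}$ available under the standing finite-dimensionality hypotheses, or more precisely from the structure of $\BB_H$ already used in the previous sections. For~(i), I would exhibit explicit formulas for the coproduct $\Delta$ of $S(t_H)_{\Theta}$ applied to the generators $p_x$ and $q_{x,y}$ of $\WW_H$, and show that the result lies in $\WW_H \otimes S(t_H)_{\Theta}$. Using~\eqref{coproduct-comm-coalg} one computes
\[
\Delta(p_x) = \Delta(t_{x_1})\,\Delta(t_{S(x_2)}) = (t_{x_1}\otimes t_{x_2})(t_{S(x_4)}\otimes t_{S(x_3)}) = t_{x_1}t_{S(x_4)} \otimes t_{x_2}t_{S(x_3)} = p_{x_1}\otimes p_{x_2}\, ,
\]
using coassociativity and $\Delta\circ S = (S\otimes S)\circ \tau\circ \Delta$; a similar bookkeeping gives a formula for $\Delta(q_{x,y})$ as a sum of terms of the form $q_{x_1,y_1}\otimes(\text{product of }t\text{'s})$, or more symmetrically in terms of the $p$'s and $q$'s. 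The point is that the left tensor factor always lands in $\WW_H$, so $\Delta(\WW_H) \subset \WW_H \otimes S(t_H)_{\Theta}$, which is exactly the statement that $\WW_H$ is a left coideal subalgebra. (One also checks $\WW_H$ is closed under multiplication by definition, and contains~$1$ since $p_1 = t_1 t_{S(1)} = t_1 t_1^{-1}\cdot(\dots)$; more carefully $p_1 = 1$ from $\eps(1)=1$.)

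For~(ii), one inclusion is immediate: since $\WW_H \subset \bar\mu(\VV_H) \subset \BB_H$ and both are subalgebras sharing the counit of $S(t_H)_{\Theta}$, we have $\WW_H^+ \subset \BB_H^+$, hence $(\WW_H^+) \subset (\BB_H^+)$. For the reverse inclusion I would show that every generator $\sigma(x,y)$, $\sigma^{-1}(x,y)$ of $\BB_H$ — equivalently, by~\cite[Cor.~4.4]{KM}, every $p_x, q_{x,y}, p'_x, q'_{x,y}$ — is congruent modulo $(\WW_H^+)$ to a scalar, so that $\BB_H^+ \subset (\WW_H^+)$. The elements $p_x$ and $q_{x,y}$ already lie in $\WW_H$, so $p_x - \eps(p_x)\cdot 1 = p_x - \eps(x)\cdot 1 \in \WW_H^+ \subset (\WW_H^+)$ and similarly for $q_{x,y}$. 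The work is with $p'_x$ and $q'_{x,y}$, which involve $t^{-1}$-variables: here I would use the relations $t_{x_1}t^{-1}_{x_2} = t^{-1}_{x_1}t_{x_2} = \eps(x)1$ to write, e.g., $p'_x$ as an element that, multiplied by a suitable product of $p$'s (which are units modulo $(\WW_H^+)$ up to scalars), becomes a $t$-polynomial expressible through the $p$'s and $q$'s; concretely one shows $p'_x p_{x} \equiv \eps(x)^2$ and $p_x \equiv \eps(x) \pmod{(\WW_H^+)}$, forcing $p'_x \equiv \eps(x) \pmod{(\WW_H^+)}$ as well (dealing separately with the non-unit case $\eps(x)=0$, where one argues $p'_x \in (\WW_H^+)$ directly by expanding). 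A clean way to organize this is to invoke that, by Proposition~\ref{B=C} and the faithful flatness of $S(t_H)_{\Theta}$ over $\BB_H$, one has $\BB_H = S(t_H)_{\Theta}^{\co-Q}$ with $Q = S(t_H)_{\Theta}/(\BB_H^+)$, and then identify $Q$ with $S(t_H)_{\Theta}/(\WW_H^+)$ by showing the latter is a Hopf algebra quotient with the same universal property, namely that it is the largest quotient on which the canonical comodule structure becomes trivial; since $\WW_H$ and $\BB_H$ generate the same coideal this pins down $(\WW_H^+) = (\BB_H^+)$.

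The main obstacle I anticipate is part~(ii), specifically controlling the $t^{-1}$-variables: $\WW_H$ is by construction generated only by the ``polynomial'' generators $p_x, q_{x,y}$, so one must genuinely verify that inverting nothing extra is needed at the level of the generated Hopf ideal, i.e. that $p'_x$ and $q'_{x,y}$ do not enlarge $(\BB_H^+)$ beyond $(\WW_H^+)$. I expect this to follow from a careful but routine manipulation of the defining relations for $t^{-1}_x$ combined with the observation that $\Theta$-type group-like denominators already lie in (a localization of) $\WW_H$, but the bookkeeping with iterated coproducts and the antipode is where the real care is required.
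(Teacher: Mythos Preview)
Your strategy for~(i) is right, but the execution has two errors. First, your final equality $t_{x_1}t_{S(x_4)} \otimes t_{x_2}t_{S(x_3)} = p_{x_1}\otimes p_{x_2}$ is false: in four-fold Sweedler notation, $p_{x_1}\otimes p_{x_2}$ equals $t_{x_1}t_{S(x_2)} \otimes t_{x_3}t_{S(x_4)}$, which is not the same. What your intermediate computation actually shows (after collapsing to three-fold notation) is the paper's formula
\[
\Delta(p_x) = t_{x_1}t_{S(x_3)} \otimes p_{x_2}\,,
\qquad
\Delta(q_{x,y}) = t_{x_1}t_{y_1}t_{S(x_3y_3)} \otimes q_{x_2,y_2}\,,
\]
so it is the \emph{right} tensor factor that lies in~$\WW_H$. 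Second, you have the direction of ``left coideal'' reversed: $\Delta(\WW_H) \subset \WW_H \otimes S(t_H)_{\Theta}$ would make $\WW_H$ a \emph{right} coideal. A left coideal requires $\Delta(\WW_H) \subset S(t_H)_{\Theta} \otimes \WW_H$, which is exactly what the correct formulas give.

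For~(ii) there is a genuine gap. Your direct approach via $p'_x p_x \equiv \eps(x)^2$ and $p_x \equiv \eps(x)$ breaks down precisely when $\eps(x)=0$, since then both congruences become $0 \equiv 0$ and say nothing about~$p'_x$; you note this but offer no argument for that case. Your ``clean way'' at the end is circular: you assert that $\WW_H$ and $\BB_H$ generate the same coideal, which is the content of~(ii). The paper's argument avoids the $p'_x, q'_{x,y}$ altogether. It works in the quotient $R = S(t_H)_{\Theta}/(\WW_H^+)$ and observes that the relations $p_x \equiv \eps(x)$ and $q_{x,y} \equiv \eps(xy)$ force $t_{S(x)} = S_R(t_x)$ and $t_x t_y = S_R(t_{S(xy)}) = S_R^2(t_{xy}) = t_{xy}$ in~$R$ (using that $R$ is commutative, so $S_R^2 = \id$), together with $t_1 = 1$. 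Hence $x \mapsto t_x$ is a Hopf algebra map $H \to R$, which factors through a surjection $H_{\ab} \to R$. Composing with the canonical $R \to S(t_H)_{\Theta}/(\BB_H^+) \cong H_{\ab}$ from \cite[Prop.~3.1]{KM} yields an isomorphism, so $(\WW_H^+) = (\BB_H^+)$. This is the missing idea: rather than chasing the primed generators, show the quotient already receives~$H_{\ab}$.
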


\begin{proof}
(i) Using the formula~\eqref{coproduct-comm-coalg} for the coproduct~$\Delta$ of~$S(t_H)_{\Theta}$, we obtain
\begin{equation*}
\Delta(p_x) = t_{x_1}t_{S(x_3)}\otimes p_{x_2} \quad\text{and}\quad
\Delta(q_{x,y}) = t_{x_1}t_{y_1}t_{S(x_3y_3)}\otimes q_{x_2,y_2} \, .
\end{equation*}
Since $\WW_H$ is generated by the elements $p_x$ and $q_{x,y}$, 
the conclusion follows.

(ii) Let $R = S(t_H)_{\Theta}/(\WW_H^+)$.
Since $\WW_H \subset \BB_H$, we have a Hopf algebra surjection $R \to S(t_H)_{\Theta}/(\BB_H^+)$. 
From
\begin{equation*}
t_{S_H(x)} = S_R(t_{x})\, ,
\quad 
t_{x}t_{y} = S_R(t_{S_H(xy)}) \in R \, ,
\end{equation*}
it follows that
\begin{equation*}
t_{x}t_{y} = S_R(S_R(t_{xy})) = t_{xy} \in R.
\end{equation*}
Here, $S_H$ (resp.\,$S_R$) denotes the antipode of $H$ (resp.\,of~$R$).  
The result for $x=y=1$, together with the existence of~$t_1^{-1}$, shows that $t_1 = 1 \in R$.
Therefore, the map $t : H \to S(t_H)_{\Theta}$ composed with the natural projection $S(t_H)_{\Theta}\to R$ 
yields a Hopf algebra surjection, which in turn factors through a Hopf algebra surjection $H_{\ab} \to R$. 
Since the composite of the last surjection with $R \to S(t_H)_{\Theta}/(\BB_H^+)$
is an isomorphism by\,\cite[Prop.\,3.1]{KM}, the desired result follows. 
\end{proof}

\subsection{The second localization problem}\label{subsec-pb2}

In the sequel we identify~$\VV_H$ with its image $\bar{\mu}(\VV_H)$ in~$\BB_H$.
We now raise another question.

\begin{Pb}[loc]
\emph{Given a Hopf algebra~$H$, 
is the algebra~$\BB_H$ a localization of the subalgebra~$\VV_H$?}
\end{Pb}

Problem\,(loc) is motivated by the fact proved in\,\cite[Sect.\,7]{AK} and in\,\cite[Sect.\,3]{KM}
that, if $\BB_H$ a localization of~$\VV_H$,
then the \emph{central localization}
\[
\AA_H = \BB_H \otimes_{\VV_H} \UU_H
\] 
of the universal comodule algebra~$\UU_H$ satisfies the following two properties.

\begin{itemize}
\item[(i)]
The extension
$\BB_H \subset \AA_H$ is a \emph{cleft $H$-Galois extension};
in particular, there is a left $\BB_H$-module, right $H$-comodule isomorphism 
\begin{equation*}
\AA_H \cong \BB_H \otimes H \, .
\end{equation*}
Thus, after localization, $\UU_H$
becomes a free module of rank~$\dim\, H$ over its subalgebra of coinvariants.

\item[(ii)]
The comodule algebra $\AA_H$ is a 
``versal deformation space'' for the \emph{forms} of~$H$ in the following sense. 
Any cleft right $H$-comodule algebra~$A$ that is a form of~$H$
(i.e., such that $k'\otimes_k A \cong k'\otimes_k H$ for some field extension $k'$ of~$k$)
is isomorphic to a comodule algebra of the form 
\begin{equation*}
\AA_H/ \mm \AA_H \, ,
\end{equation*}
where~$\mm$ is some maximal ideal of~$\BB_H$.
Conversely, if in addition $S(t_H)_{\Theta}$ is faithfully flat over~$\BB_H$,
then for any maximal ideal~$\mm$ of~$\BB_H$,
the comodule algebra~$\AA_H/ \mm \AA_H$ is a form of~$H$.
(The faithful flatness assumption is satisfied in a number of cases, 
including the case when $H$ is finite-dimensional, see\,\cite[Sect.\,3.2]{KM} and below.)
\end{itemize}

In the language of non-commutative geometry, $\AA_H$ is a 
``non-commutative fiber bundle'' over the generic base algebra~$\BB_H$.

In view of the statements of Section\,\ref{subsec-VH}, to obtain a positive answer to Problem~(loc), it suffices to check that
the elements $p'_x$ and $q'_{x,y}$ defined by\,\eqref{def-pq'} are all fractions of elements of~$\VV_H$.
This is easily verified if $x$, $y$ are certain special elements of~$H$.
Indeed, by elementary computations, if $x$, $y$ are both group-like elements, then
\begin{equation*}
p'_x = \frac{1}{p_x} \quad\text{and}\quad
q'_{x,y} = \frac{1}{q_{x,y}} \, .
\end{equation*}
Similarly, if $y$ is group-like and $x$ is skew-primitive with $\Delta(x) = g \otimes x + x \otimes h$
and $\eps(y) = 0$ for some group-like elements~$g,h$, then
\begin{equation*}
p'_x = - \frac{p_x}{p_g p_h} \quad\text{and}\quad
q'_{x,y} = - \frac{q_{x,y}}{q_{g,y} q_{h,y}} \, .
\end{equation*}

Our second main result is the following.

\begin{theorem}\label{th-local}
Let $H$ be a pointed Hopf algebra such that $S(t_H)_{\Theta}$ is faithfully flat over~$\BB_H$, 
then $\BB_H$ is a localization of~$\VV_H$.
\end{theorem}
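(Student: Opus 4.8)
The plan is to reduce Problem (loc) to the localization statements we already have available, namely Proposition~\ref{prop-CH-loc} and Lemma~\ref{lem2-VH}, using the faithful-flatness hypothesis to identify $\BB_H$ with a coinvariant algebra. First I would recall that since $S(t_H)_{\Theta}$ is faithfully flat over~$\BB_H$, the descent-type result \cite[Th.\,3]{Ta2} applies: $\BB_H = S(t_H)_{\Theta}^{\co-Q}$, where $Q = S(t_H)_{\Theta}/(\BB_H^+)$, and by \cite[Prop.\,3.1]{KM} (as used already in the proof of Proposition~\ref{B=C}) we have $Q \cong H_{\ab}$. So $\BB_H = \CC_H = S(t_H)_{\Theta}^{\co-H_{\ab}}$, the algebra studied in Section~\ref{subsec-coaction}. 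By Lemma~\ref{lem2-VH}(ii), the same conclusion holds with $\WW_H$ in place of $\BB_H$, since $(\WW_H^+) = (\BB_H^+)$ and hence the corresponding quotient Hopf algebra is again $H_{\ab}$; this is the key point letting us bring the explicitly $t$-polynomial generators $p_x, q_{x,y}$ into play.

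Next I would invoke Proposition~\ref{prop-CH-loc}: $\CC_H$ is a localization of $S(t_H)^{\co-H_{\ab}}$. More precisely, from the proof of that proposition, every element $\bar P \in \CC_H$ becomes an element $P \in S(t_H)^{\co-H_{\ab}}$ after multiplying by a suitable group-like product $\Theta$ of the Takeuchi determinants chosen as in Lemma~\ref{lem-CH-loc} (so that $\tilde q(\Theta) = 1$). Thus $\BB_H = \CC_H = S(t_H)^{\co-H_{\ab}}[\,\Theta^{-1}\,]$ for an appropriate family of such $\Theta$. It therefore suffices to show two things: (1) each relevant $\Theta$, or at least some power of it, lies in $\VV_H$ (identified with $\bar\mu(\VV_H) \subset \BB_H \cap S(t_H)$); and (2) $S(t_H)^{\co-H_{\ab}}$ is, up to a further localization, generated by elements of $\VV_H$ — equivalently, that $\WW_H$ and $S(t_H)^{\co-H_{\ab}}$ have the same fraction field, or that $S(t_H)^{\co-H_{\ab}}$ is a localization of $\WW_H$.

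For step (2), here is where pointedness enters. Because $H$ is pointed, $S(t_H)_{\Theta}$ is pointed (Lemma~\ref{lem-pointed}), and so is the Hopf subalgebra $\CC_H = S(t_H)_{\Theta}^{\co-H_{\ab}}$; its group-likes form a finitely generated abelian group, so $\CC_H$ is a localization of a finitely generated commutative Hopf algebra, and in fact $\CC_H$ and $S(t_H)^{\co-H_{\ab}}$ differ only by inverting group-likes. Now $\WW_H \subset S(t_H)^{\co-H_{\ab}} \subset \CC_H = \BB_H$, and by Lemma~\ref{lem2-VH}(i),(ii), $\WW_H$ is a left coideal subalgebra with $(\WW_H^+) = (\BB_H^+)$, so by \cite[Th.\,1.3]{Ma1} — exactly as in the proof of Lemma~\ref{lem-AB} — $\BB_H$ is the smallest left coideal subalgebra containing $\WW_H$ whose group-likes form a group, i.e. $\BB_H = \WW_H[\,g^{-1} \mid g \in G(\WW_H)\,]$. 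Since each such $g$ is a group-like in $\WW_H \subset \bar\mu(\VV_H)$, inverting it keeps us inside a localization of $\VV_H$ (the elements $p'_x, q'_{x,y}$ for group-like $x,y$ are already fractions of $\VV_H$-elements, as noted before the theorem). Hence $\BB_H$ is a localization of $\WW_H$, and a fortiori of $\VV_H$.

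The main obstacle I expect is verifying cleanly that inverting the group-like elements $g \in G(\WW_H)$ — which by Lemma~\ref{lem-AB}-type reasoning is exactly what turns $\WW_H$ into $\BB_H$ — stays within a \emph{localization} of $\VV_H$ rather than merely within $\BB_H$; that is, confirming each such $g$ is genuinely invertible as a fraction over $\VV_H$. This comes down to the observation that $G(\WW_H)$ consists of monomials $p_x = t_x t_{S(x)}$ with $x$ group-like (and products thereof), for which $p'_x = 1/p_x$ is already in the fraction field of $\VV_H$ by the elementary computation recorded just before the statement. A secondary technical point is checking that the finitely many Takeuchi determinants $\Theta$ needed in step (1) can be taken inside $\VV_H$, which should follow by applying Lemma~\ref{lem-VH} and the determinantal structure of the $\Theta_\alpha$ (entries being $p$- and $q$-type elements after the normalization $\tilde q(\Theta) = 1$), possibly after replacing $\Theta$ by a power; once both points are settled the theorem follows by assembling the localizations.
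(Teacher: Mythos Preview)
Your third paragraph contains exactly the paper's proof: apply Lemma~\ref{lem-AB} to $A = S(t_H)_{\Theta}$ (pointed by Lemma~\ref{lem-pointed}) and $B = \WW_H$ (a left coideal subalgebra by Lemma~\ref{lem2-VH}\,(i)), using Lemma~\ref{lem2-VH}\,(ii) to identify $A/(\WW_H^+)$ with $A/(\BB_H^+)$ so that $A^{\co-Q} = \BB_H$. This yields $\BB_H = \WW_H[\,g^{-1} \mid g \in G(\WW_H)\,]$, a localization of~$\WW_H$.

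Everything else in your proposal is unnecessary, and your ``main obstacle'' is not an obstacle. Once $\BB_H$ is a localization of~$\WW_H$ at a multiplicative set $S \subset \WW_H \subset \VV_H$, the chain $\WW_H \subset \VV_H \subset \BB_H$ forces $\VV_H[S^{-1}] = \BB_H$ as well: localizing the inclusions at~$S$ gives $\BB_H = \WW_H[S^{-1}] \subset \VV_H[S^{-1}] \subset \BB_H[S^{-1}] = \BB_H$. There is no need to identify $G(\WW_H)$ explicitly, no need to bring in the Takeuchi determinants~$\Theta_\alpha$ or Proposition~\ref{prop-CH-loc}, and no need to pass through $S(t_H)^{\co-H_{\ab}}$. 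Your steps (1) and (2) and the surrounding discussion can simply be deleted.
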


The faithful flatness assumption is verified for instance if the pointed Hopf algebra~$H$ 
is cocommutative by\,\cite[Th.\,3.13]{KM}, 
if $k[G(H)] \hookrightarrow H$ splits as an algebra map by\,\cite[Remark 3.14\,(a)]{KM},
if $H$ is finite-dimensional, 
or if each element of the kernel of the canonical group epimorphism
$G(H)_{\ab} \to G(H_{\ab})$ is of finite order by\,\cite[Th.\,3.9]{KM}.

\begin{proof}
Since $\WW_H \subset \VV_H \subset \BB_H$, it is enough to prove that 
$\BB_H$ is a localization of~$\WW_H$.
It follows from the assumptions and from\,\cite[Lemma\,3.11]{KM} that 
\[
\BB_H= S(t_H)_{\Theta}^{\co-Q} \, , 
\]
where $Q = S(t_H)_{\Theta}/(\BB_H^+)$.
Now, it follows from Lemma\,\ref{lem2-VH}\,(ii) that 
\[
Q = S(t_H)_{\Theta}/(\WW_H^+) \,  .
\]
We conclude by applying Lemma\,\ref{lem-AB} to the Hopf algebra
$A = S(t_H)_{\Theta}$, which is pointed by Lemma\,\ref{lem-pointed},
and to the left coideal subalgebra~$B = \WW_H$.
\end{proof}

\subsection{A commutative square}\label{subsec-square}

Recall from Section\,\ref{subsec-coaction}
that $S(t_H)$ has an~$H_{\ab}$-co\-mod\-ule algebra structure 
with the coaction~$\delta_S$ defined by\,\eqref{coaction-SH}.

Let $\pi: T(X_H) \to S(t_H)$ be the abelianization map, which is the algebra map determined by
$\pi(X_x) = t_x$ for all $x\in H$.
It easily follows from the definitions that the square
\begin{equation}\label{square}
\begin{xy}
(0,7)    *+{T(X_H)}           ="a",
(33,7)   *+{S(t_H)}           ="b",
(0,-8)  *+{T(X_H) \otimes H}         ="c",
(33,-8) *+{S(t_H) \otimes H_{\ab}} ="d",
(16,0)  *{\scriptstyle\circlearrowright}="cir",
\SelectTips{eu}{}
\ar^{\pi} "a";"b"
\ar^{\delta_T}    "a";"c"
\ar^{\delta_S}   "b";"d"
\ar^{\pi\otimes q} "c";"d"
\end{xy}
\end{equation}
commutes.

From this square we deduce that
$\pi$ sends the subalgebra~$T(X_H)^{\co-H}$ of $H$-co\-in\-var\-i\-ants to the 
subalgebra~$S(t_H)^{\co-H_{\ab}}$ of $H_{\ab}$-coinvariants:
\begin{equation*}\label{coinv}
\pi: T(X_H)^{\co-H} \to S(t_H)^{\co-H_{\ab}} \, .
\end{equation*}

We can also express $(\id \otimes \, q) \circ \mu$ as the diagonal in the square:
\begin{equation}\label{eq-mu}
(\id \otimes \, q) \circ \mu 
= \delta_S \circ \pi \, .
\end{equation}

\subsection{The algebra of functions on a finite group}\label{subsec-UI}

Uma Iyer (January 2014) proved the following result, which provides a positive answer to Problem\,(loc) 
when $H$ is the algebra of functions on a finite group.

\begin{theorem}\label{thm-UI}
If $H = O_k(G)$ is the Hopf algebra of $k$-valued functions on a finite group~$G$ and 
the characteristic of~$k$ does not divide the order of~$G$, 
then $\BB_H$ is a localization of $\VV_H$.
\end{theorem}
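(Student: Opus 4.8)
The plan is to mimic the proof of Theorem~\ref{th-local}, replacing the role played there by pointedness of $H$ (and hence of $S(t_H)_{\Theta}$) with the finite-dimensionality and separability coming from the hypothesis $\mathrm{char}\,k \nmid \card\,G$. Since $\WW_H \subset \VV_H \subset \BB_H$, it again suffices to show that $\BB_H$ is a localization of~$\WW_H$. Because $H = O_k(G)$ is finite-dimensional, Proposition~\ref{B=C}\,(i) gives $\BB_H = \CC_H = S(t_H)_{\Theta}^{\co-H_{\ab}}$, and the faithful flatness of $S(t_H)_{\Theta}$ over~$\BB_H$ holds by property~(a)/(b) in Section~\ref{subsec-generic} together with~\cite[Th.\,3]{Ta2}, so that $\BB_H = S(t_H)_{\Theta}^{\co-Q}$ with $Q = S(t_H)_{\Theta}/(\BB_H^+) \cong H_{\ab}$. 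By Lemma~\ref{lem2-VH}\,(ii) we also have $Q = S(t_H)_{\Theta}/(\WW_H^+)$, so $\BB_H = A^{\co-Q}$ where $A = S(t_H)_{\Theta}$ and $B = \WW_H$ is a left coideal subalgebra by Lemma~\ref{lem2-VH}\,(i).

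The first point to check is that for $H = O_k(G)$ the commutative Hopf algebra $A = S(t_H)_{\Theta}$ is \emph{pointed}, so that Lemma~\ref{lem-AB} applies verbatim. From the proof of Proposition~\ref{GNP-NP} we have $S(t_H) = k[t_g\mid g\in G]$ with the grouplike-preserving coproduct~\eqref{coprod-SOG}, and $S(t_H)_{\Theta}$ is obtained by inverting the single grouplike element $\Theta_G = \det(t_{gh^{-1}})$; here I would simply invoke Lemma~\ref{lem-pointed}, which says exactly that $S(t_C)_{\Theta}$ is pointed whenever $C$ is --- and the coalgebra underlying $O_k(G)$, being a direct sum of one-dimensional coalgebras, is pointed. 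Thus Lemma~\ref{lem-AB}, applied to $A = S(t_H)_{\Theta}$ and $B = \WW_H$, identifies $\BB_H = A^{\co-Q}$ with the localization $\WW_H[\,g^{-1} \mid g\in G(\WW_H)\,]$ of~$\WW_H$, where $G(\WW_H) = G(S(t_H)_{\Theta}) \cap \WW_H$.

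That already gives the theorem, since a localization of $\WW_H$ by finitely many grouplike elements is a fortiori a localization of $\WW_H$, hence of $\VV_H$, hence of a polynomial algebra. The only genuinely new verification beyond the pattern of Theorem~\ref{th-local} is the hypothesis that $\mathrm{char}\,k \nmid \card\,G$: it is needed precisely to know that $Q \cong H_{\ab} = O_k(G)_{\ab}$ is well-behaved --- concretely, it ensures (via \cite[Prop.\,3.1]{KM} and Lemma~\ref{lem2-VH}\,(ii)) that the Hopf ideal $(\WW_H^+)$ really does recover $H_{\ab}$ rather than a proper quotient, and it guarantees the faithful flatness of $S(t_H)_{\Theta}$ over $\BB_H$ used above. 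I expect the main obstacle to be bookkeeping: confirming that all the structural inputs from \cite{KM} invoked in the proof of Theorem~\ref{th-local} (faithful flatness, the identification $Q\cong H_{\ab}$, and Lemma~\ref{lem2-VH}) are valid under the separability hypothesis on $O_k(G)$ rather than under pointedness of~$H$ itself --- $O_k(G)$ is cosemisimple but typically not pointed, so one must make sure none of the cited results secretly used pointedness of~$H$. Once that is checked, the argument is a direct transcription of the proof of Theorem~\ref{th-local}.
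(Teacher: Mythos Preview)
Your argument has a genuine gap: the coalgebra underlying $H = O_k(G)$ is \emph{not} pointed when $G$ is non-abelian. The basis vectors $e_g$ are not group-like; indeed $\Delta(e_g) = \sum_{h} e_{gh^{-1}} \otimes e_h$ by~\eqref{coprod-OG}, so $k e_g$ is not a subcoalgebra. Under the hypothesis $\mathrm{char}\,k \nmid \card\,G$ the coalgebra $O_k(G)$ is cosemisimple and its simple subcoalgebras are the duals of the matrix blocks of $k[G]$, i.e.\ they have dimension $n_i^2$ where $n_i$ runs over the degrees of the irreducible $k$-representations of~$G$; these are all $1$ only when $G$ is abelian. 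Consequently Lemma~\ref{lem-pointed} does not apply, and in fact $S(t_H)_{\Theta}$ cannot be pointed either: it surjects as a Hopf algebra onto $H_{\ab} = O_k(G)$ via~$\tilde q$, and a pointed Hopf algebra can only surject onto a pointed one (by item~(ii) in the proof of Lemma~\ref{lem-pointed}). So Lemma~\ref{lem-AB} is unavailable, and the transcription of the proof of Theorem~\ref{th-local} breaks down precisely at its key step.

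The paper's proof takes a different route that exploits cosemisimplicity rather than pointedness. Since $H$ is commutative one has $H_{\ab}=H$ and, by~\eqref{eq-mu}, $\mu = \delta_S \circ \pi$; using that $H$ is cosemisimple (this is where $\mathrm{char}\,k \nmid \card\,G$ enters), the surjection $\pi : T(X_H) \to S(t_H)$ splits $H$-colinearly, which forces $\VV_H = S(t_H)^{\co-H}$. One then concludes via Proposition~\ref{prop-CH-loc}, which shows $\BB_H = \CC_H$ is a localization of $S(t_H)^{\co-H_{\ab}} = S(t_H)^{\co-H}$. The subalgebra $\WW_H$ and Lemma~\ref{lem-AB} play no role.
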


Since $H$ is a commutative Hopf algebra, 
the canonical map $q: H \to H_{\ab}$ is the identity and $\tilde{q}: S(t_H)_{\Theta} \to H$
is given by $\tilde{q}(t_x) = x$ for all $x\in H$.

The coaction $\delta_S$ now turns $S(t_H)_{\Theta}$ and $S(t_H)$ into $H$-comodule algebras.
The commutative square~\eqref{square} means that $\pi : T(X_H) \to S(t_H)$ is an $H$-comodule algebra map
and that it induces a map on the subalgebras of coinvariant elements
\begin{equation*}
\pi: T(X_H)^{\co-H} \to S(t_H)^{\co-H} \, .
\end{equation*}
It follows from\,\eqref{eq-mu} that the map $\mu : T(X_H) \to S(t_H) \otimes H$ is given in this case by
\begin{equation}\label{eq2-mu}
\mu = \delta_S \circ \pi \, .
\end{equation}

Since $\bar{\mu}: \UU_H \to S(t_H) \otimes H$ is a comodule algebra injection 
and the subalgebra of $H$-coinvariants in~$S(t_H) \otimes H$ 
is $S(t_H) = S(t_H) \otimes 1$, we obtain the inclusion 
\[
\VV_H \subset S(t_H) \, . 
\]

\begin{lemma}\label{lem-a}
Under the hypotheses of the theorem,
the algebra $\VV_H$ is the subalgebra of $H$-coinvariants in~$S(t_H)$:
\[
\VV_H = S(t_H)^{\co-H} \, .
\]
\end{lemma}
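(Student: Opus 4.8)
The inclusion $\VV_H \subset S(t_H)^{\co-H}$ is already established in the text, so the task is to prove the reverse inclusion $S(t_H)^{\co-H} \subset \VV_H$. The idea is to exploit the averaging operator available because $\card\, G$ is invertible in $k$. Writing the $H$-coaction $\delta_S$ on $S(t_H)$ as the $G$-action $h\cdot t_g$ via the dictionary \eqref{coaction-action}, the coinvariants $S(t_H)^{\co-H}$ are exactly the $G$-invariant polynomials $S(t_H)^G$, and the Reynolds operator $e = \frac{1}{\card\, G}\sum_{h\in G} h\cdot(-)$ is a projection of $S(t_H)$ onto $S(t_H)^G$ which is a module map over $S(t_H)^G$. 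So the plan is: first show that $S(t_H)$, as an $H$-comodule algebra, is generated over $\VV_H = S(t_H)^{\co-H}$ by enough elements whose averages land in $\VV_H$; more precisely, to show every $G$-invariant polynomial already lies in the image of $\bar\mu(\VV_H)$.

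First I would recall, using \eqref{eq2-mu}, that $\mu = \delta_S\circ\pi$, and that $\pi:T(X_H)\to S(t_H)$ is a surjective $H$-comodule algebra map with $\pi(T(X_H)^{\co-H}) \subset S(t_H)^{\co-H}$. The universal comodule algebra $\UU_H = T(X_H)/I_H$ embeds via $\bar\mu$ into $S(t_H)\otimes H$ with image the same as the image of $\mu$; since $\mu = \delta_S\circ\pi$ and $\delta_S$ is injective on $S(t_H)$ (it is a coaction, hence split by $\id\otimes\eps$), the image of $\bar\mu$ is $\delta_S(\pi(T(X_H))) = \delta_S(S(t_H))$. Identifying $\UU_H$ with $S(t_H)$ through this, $\VV_H$ corresponds to those $a\in S(t_H)$ with $\delta_S(a) \in S(t_H)\otimes 1$, i.e. exactly $S(t_H)^{\co-H}$.

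Concretely: I would argue that $\bar\mu:\UU_H \hookrightarrow S(t_H)\otimes H$ factors as $\UU_H \cong T(X_H)/\Ker(\delta_S\circ\pi)$, and since $\pi$ is surjective and $\delta_S$ is injective, $\Ker(\delta_S\circ\pi) = \Ker\pi$, whence $\UU_H \cong S(t_H)$ as $H$-comodule algebras (the $H$-comodule structure on the right being $\delta_S$). Passing to $H$-coinvariants on both sides gives $\VV_H = \UU_H^{\co-H} \cong S(t_H)^{\co-H}$, and unwinding the identification shows this isomorphism is compatible with the inclusions into $S(t_H)$, i.e. it is the identity on the image. That yields $\VV_H = S(t_H)^{\co-H}$ as subalgebras of $S(t_H)$.

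The main obstacle is the claim $\Ker(\delta_S\circ\pi) = \Ker\pi$, which amounts to the injectivity of $\delta_S$ on $S(t_H)$ — harmless, since any coaction is split by the counit — together with checking that $\pi$ is genuinely surjective onto $S(t_H)$, which is clear from $\pi(X_x)=t_x$. The only place where the hypothesis that $\card\, G$ is prime to the characteristic is actually needed is not in this particular lemma's statement $\VV_H = S(t_H)^{\co-H}$ — which is purely formal — but it is worth double-checking whether the intended proof instead routes through the averaging operator; if so, one shows directly that $e$ maps $S(t_H)$ onto $S(t_H)^{\co-H}$ and that, since $T(X_H)^{\co-H}\twoheadrightarrow S(t_H)^{\co-H}$ can be arranged via $e\circ\pi = \pi\circ(\text{averaging on }T(X_H))$ using the commutative square \eqref{square}, every coinvariant polynomial is $\pi$ of a coinvariant element of $T(X_H)$, hence lies in $\bar\mu(\VV_H)$. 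Either route is short; I would present the formal one, and only invoke $(\card\,G, \mathrm{char}\,k)=1$ if it is needed to guarantee that $T(X_H)^{\co-H}$ surjects onto $S(t_H)^{\co-H}$ under $\pi$ (which it does when averaging is available, but may fail in modular characteristic).
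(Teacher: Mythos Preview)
Your formal route is correct and is genuinely different from the paper's argument. The paper proves the reverse inclusion by invoking cosemisimplicity of~$H$ (this is where the hypothesis that $\card\,G$ is prime to $\mathrm{char}\,k$ enters): the surjection $\pi: T(X_H)\to S(t_H)$ then splits $H$-colinearly, so $T(X_H)^{\co-H}\to S(t_H)^{\co-H}$ is onto, and one concludes $S(t_H)^{\co-H}=\mu(T(X_H)^{\co-H})=\bar\mu(\VV_H)$. Your argument instead observes that $\mu=\delta_S\circ\pi$ with $\delta_S$ injective and $\pi$ surjective forces $I_H=\Ker\pi$, whence $\UU_H\cong S(t_H)$ as $H$-comodule algebras; taking coinvariants gives $\VV_H\cong S(t_H)^{\co-H}$, and you correctly verify that under the identification $\bar\mu(\VV_H)\subset S(t_H)\otimes 1$ this isomorphism is the identity. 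This is more elementary, and as you suspected it shows that the characteristic hypothesis is not actually needed for this particular lemma (it is still used in the proof of Theorem\,\ref{thm-UI} via Proposition\,\ref{prop-CH-loc}, but not here).

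One small correction: the text preceding the lemma only establishes $\VV_H\subset S(t_H)$, not $\VV_H\subset S(t_H)^{\co-H}$; the latter inclusion is part of what the lemma is asserting. This does not affect your argument, since your formal route yields both inclusions at once.
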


\begin{proof}
Let $P \in T(X_H)^{\co-H}$. By definition, $\delta_T(P) = P \otimes 1$. Since $\pi$ is a comodule algebra map,
we have $\delta_S(\pi(P)) = \pi(P) \otimes 1$. 
It follows from this and from\,\eqref{eq2-mu} that 
\[
\mu(P) = \delta_S(\pi(P)) = \pi(P) \otimes 1 \, .
\]
In other words, the map $\mu$ restricted to~$T(X_H)^{\co-H}$ coincides with~$\pi$ and thus sends 
$T(X_H)^{\co-H}$ into~$S(t_H)^{\co-H}$:
\[
\mu = \pi: T(X_H)^{\co-H} \to S(t_H)^{\co-H} \, .
\]
This proves the inclusion $\VV_H \subset S(t_H)^{\co-H}$.

Under the hypotheses of the theorem, $H$ is cosemisimple. 
So the surjection $\pi: T(X_H) \to S(t_H)$ splits $H$-colinearly, 
implying that $T(X_H)^{\co-H} \to S(t_H)^{\co-H}$ is surjective.
Thus, $S(t_H)^{\co-H} = \mu(T(X_H)^{\co-H}) = \bar{\mu}(\VV_H)$.
\end{proof}

\begin{proof}[Proof of Theorem\,\ref{thm-UI}]
By Propositions\,\ref{prop-CH-loc} and\,\ref{B=C}, 
$\BB_H = \CC_H$ is a localization of the algebra~$S(t_H)^{\co-H_{\ab}} = S(t_H)^{\co-H}$.
We conclude with Lemma\,\ref{lem-a}.
\end{proof}

\section*{Acknowledgments}

We thank Uma Iyer for allowing us to include her Theorem\,\ref{thm-UI} in this paper.
We are grateful to David Harari for an enlightening remark helping us to simplify the proof of Proposition\,\ref{GNP-NP}.
The second-named author was supported by JSPS Grant-in-Aid for Scientific Research (C)~23540039.

\end{document}